\newtheorem{thm}{Theorem}[section]
\newtheorem{cor}[thm]{Corollary}
\newtheorem{lem}[thm]{Lemma}
\newtheorem{defn}[thm]{Definition}
\newtheorem{rem}[thm]{\bf{Remark}}
\numberwithin{equation}{section}
\begin{document}


\title{Graphs whose all maximal induced forests are of the same order}
\author{Reza Jafarpour-Golzari}
\address{Department of Mathematics, Payame Noor University, P.O.BOX 19395-3697 Tehran, Iran; Department of Mathematics, Institute for Advanced Studies
in Basic Science (IASBS), P.O.Box 45195-1159, Zanjan, Iran}

\email{r.golzary@iasbs.ac.ir}

\thanks{{\scriptsize
\hskip -0.4 true cm MSC(2010): Primary: 05C70; Secondary: 05C38, 05C75.
\newline Keywords: Maximal forest, Induced subgraph, Cover.\\
\\
\newline\indent{\scriptsize}}}

\maketitle


\begin{abstract}
In this paper, a new concept in graphs namely well-f-coveredness is introduced. We characterize all graphs with such property, whose maximum induced forests are of boundary order. Also we prove several propositions concerning with obtaining large well-f-covered graphs from smaller ones. By the way, some interesting classes of wel
l-f-covered graphs are characterized.
\end{abstract}

\vskip 0.8 true cm


\section{\bf Introduction}
\vskip 0.4 true cm
In the sequel, all graphs are finite and simple. Also we refer to \cite{Wes} for any backgrounds in graph theory.

A graph $G$ is said to be well-covered, whenever all maximal independent vertex sets in $G$ are of the same size. In other words, $G$ is well-covered, whenever all its maximal induced edgeless subgraphs are of the same size. The concept of well-coveredness was introduced by Plummer (See \cite{Plu1}) and many mathematicians have investigated about it and its applications in recent decades (for instance, see \cite{Fin2}, \cite{Plu2}, and \cite{Ran}). Some various classes of well-covered graphs have been characterized and behavior of well-coveredness under some graph operations has been studied (see \cite{Jaf}, \cite{Fin1}, \cite{Top}, and \cite{Vil}, for instance). Motivated by concept of well-covered graphs, we introduce a new concept in graphs, named well-f-covered graph.

After offering some elementary examples of graphs satisfying well-f-coveredness property, we study the relationship between this property and well-coveredness. Then we characterize all well-f-covered graphs whose maximum induced forests are of boundary order (Theorems 5.1 and 5.2).
Also we prove several propositions which offer some approaches to obtain larger well-f-covered graphs from smaller ones.

Since the concepts of well-coveredness and forest both have a wide range of applications in graph theory and many other branches of mathematics and some other sciences, it seems that the concept of well-f-coveredness can have a good prospect of applications in the future.

\vskip 0.8 true cm

\section{\bf Preliminaries}
\vskip 0.4 true cm
Let $G$ be a graph and $X$ be a subset of $V(G)$. The subgraph of $G$ induced by $X$, is denoted by $G[X]$. For convenience, in the continuation of this article, we will assume that subgraph means induced subgraph. Also for a vertex $v$ in $G$, the set of all neighbors of $v$ is denoted by $N(v)$.

In the graph $G$, an edge $e$ is said to be bridge, if its removal increases the number of connected components. An edge $e=\{x,y\}$ is called a pendant edge if at least one of $x$ and $y$ be of degree 1. Adding a pendant edge to $G$, means adding a vertex as $x$ and an edge $e=\{x,y\}$ to $G$, where $y$ is a vertex of $G$, and removing a pendant edge, means removing a vertex $x$ of degree 1, and the edge incident with $x$. A cord for a cycle $C$, is an edge $e=\{x,y\}$ where $x$ and $y$ are two non-adjacent distinct vertices of $C$.

A bipartite graph $G$ with parts $X$ and $Y$, is said to be complete bipartite, if each vertex of $X$ be adjacent with every vertex of $Y$. Such a graph, if $X$ and $Y$ be of sizes $r$ and $s$, respectively, is denoted by $K_{r,s}$. A wheel of order $n$, denoted by $W_{n}$, is a graph obtained by taking a cycle of size $n-1$, and adding a vertex which is adjacent with all others. In a connected graph, the length of a shortest path between two vertices $x$ and $y$, is called distance of $x$ and $y$, and is denoted by $d(x,y)$.

A maximal forest in the graph $G$, is a subgraph $F$ which is a forest and there is not any vertex $x$ such that $G[V(F)\cup \{x\}]$ is a forest, as well. Such a subgraph is a maximum forest in $G$, whenever its order be greater than or equal to the order of any other forest in $G$.

A clique in the graph $G$, is a set $Q\subseteq V(G)$ in which every two distinct vertices are adjacent. A matching in $G$,
 is a set $M\subseteq E(G)$ in which every two distinct edges are non-adjacent.

 In a graph $G$, a subset $X$ of $V(G)$ is said to be an independent vertex set, whenever no two vertices in $X$ be adjacent. The size of a maximum independent set in $G$ is called the independence number of $G$ and is denoted by $\alpha (G)$. The graph $G$ is said to be well-covered whenever all its maximal independent sets be of the same size \cite{Plu1}.

\vskip 0.8 true cm

\section{\bf Well-f-covered graphs}
\vskip 0.4 true cm
For a graph $G$, we name the order of any maximum forest of $G$, the forest number of $G$ and we denote it by $f(G)$.

Now we introduce our new concept.

\begin{defn}
A graph $G$ is said to be well-f-covered, if all its maximal forests be of the same order $f(G)$.
\end{defn}

Clearly each forests itself is well-f-covered. Also every cycle of size $n$, $C_{n}$, is well-f-covered and $f(C_{n})=n-1$.

Each $K_{n}$ is well-f-covered and $f(K_{n})=2$, for $n\geq 2$, meanwhile $f(K_{1})=1$.

If a graph $G$ be well-f-covered, adding or removing a bridge dose not invalidate well-f-coveredness and also the forest number (note that a bridge is not contained in any cycle). Therefore for checking that a graph is well-f-covered or not, one can remove all its bridges, first. 

\vskip 0.8 true cm

\section{\bf The relationship to well-coveredness}
\vskip 0.4 true cm
Well-f-coveredness and well-coveredness are two independent properties in graph. Consider the following four graphs:
\begin{center}
\definecolor{ududff}{rgb}{0.30196078431372547,0.30196078431372547,1.}
\begin{tikzpicture}[line cap=round,line join=round,>=triangle 45,x=1.0cm,y=1.0cm]
\clip(-1.,1.3) rectangle (10.5,6);
\draw [line width=1.2pt] (0.9,4.44)-- (-0.6,4.46);
\draw [line width=1.2pt] (-0.6,4.46)-- (0.1,3.3);
\draw [line width=1.2pt] (0.9,4.44)-- (0.1,3.3);
\draw [line width=1.2pt] (3.26,3.84)-- (2.42,4.46);
\draw [line width=1.2pt] (2.42,4.46)-- (2.4,3.22);
\draw [line width=1.2pt] (3.26,3.84)-- (2.4,3.22);
\draw [line width=1.2pt] (3.26,3.84)-- (4.08,4.44);
\draw [line width=1.2pt] (4.08,4.44)-- (4.04,3.22);
\draw [line width=1.2pt] (3.26,3.84)-- (4.04,3.22);
\draw [line width=1.2pt] (6.62,4.86)-- (5.56,4.86);
\draw [line width=1.2pt] (5.56,4.86)-- (6.6,3.88);
\draw [line width=1.2pt] (6.6,3.88)-- (5.58,3.88);
\draw [line width=1.2pt] (5.58,3.88)-- (6.58,2.88);
\draw [line width=1.2pt] (6.58,2.88)-- (5.58,2.88);
\draw [line width=1.2pt] (8.98,4.9)-- (8.2,4.14);
\draw [line width=1.2pt] (8.2,4.14)-- (8.18,2.8);
\draw [line width=1.2pt] (8.18,2.8)-- (9.68,2.78);
\draw [line width=1.2pt] (9.68,2.78)-- (9.7,4.12);
\draw [line width=1.2pt] (8.98,4.9)-- (9.7,4.12);
\draw [line width=1.2pt] (9.7,4.12)-- (8.2,4.14);
\draw (-0.27,2.6) node[anchor=north west] {$\mathit{G_{1}}$};
\draw (2.92,2.6) node[anchor=north west] {$\mathit{G_{2}}$};
\draw (5.76,2.54) node[anchor=north west] {$\mathit{G_{3}}$};
\draw (8.6,2.52) node[anchor=north west] {$\mathit{G_{4}}$};
\draw (3.98,1.79) node[anchor=north west] {\small{Figure 1}};
\draw (2,4.84) node[anchor=north west] {$\mathit{x}$};
\draw (1.99,3.23) node[anchor=north west] {$\mathit{y}$};
\draw (3.03,4.34) node[anchor=north west] {\textit{z}};
\draw (4.04,4.84) node[anchor=north west] {$\mathit{t}$};
\draw (4.0,3.3) node[anchor=north west] {$\mathit{p}$};
\draw (8.73,5.33) node[anchor=north west] {$\mathit{e}$};
\draw (7.75,4.5) node[anchor=north west] {$\mathit{a}$};
\draw (9.65,4.54) node[anchor=north west] {$\mathit{b}$};
\draw (9.65,2.91) node[anchor=north west] {$\mathit{c}$};
\draw (7.7,3.0) node[anchor=north west] {$\mathit{d}$};
\begin{scriptsize}
\draw [fill=ududff] (0.9,4.44) circle (1.5pt);
\draw [fill=ududff] (-0.6,4.46) circle (1.5pt);
\draw [fill=ududff] (0.1,3.3) circle (1.5pt);
\draw [fill=ududff] (3.26,3.84) circle (1.5pt);
\draw [fill=ududff] (2.42,4.46) circle (1.5pt);
\draw [fill=ududff] (2.4,3.22) circle (1.5pt);
\draw [fill=ududff] (4.08,4.44) circle (1.5pt);
\draw [fill=ududff] (4.04,3.22) circle (1.5pt);
\draw [fill=ududff] (6.62,4.86) circle (1.5pt);
\draw [fill=ududff] (5.56,4.86) circle (1.5pt);
\draw [fill=ududff] (6.6,3.88) circle (1.5pt);
\draw [fill=ududff] (5.58,3.88) circle (1.5pt);
\draw [fill=ududff] (6.58,2.88) circle (1.5pt);
\draw [fill=ududff] (5.58,2.88) circle (1.5pt);
\draw [fill=ududff] (8.98,4.9) circle (1.5pt);
\draw [fill=ududff] (8.2,4.14) circle (1.5pt);
\draw [fill=ududff] (8.18,2.8) circle (1.5pt);
\draw [fill=ududff] (9.68,2.78) circle (1.5pt);
\draw [fill=ududff] (9.7,4.12) circle (1.5pt);
\end{scriptsize}
\end{tikzpicture}
  \end{center}
  \label{Figure 1}
The graph $G_{1}$ is both well-f-covered and well-covered. $G_{2}$ is not well-f-covered (the maximal forests $G_{2}[\{x,y,t,p\}]$ and $G_{2}[\{x,z,p\}]$ are of different orders) nor well-covered. $G_{3}$ is well-f-covered but not well- covered. Finally, $G_{4}$ is not well-f-covered (the maximal forests $G_{4}[\{e,a,d,c\}]$ and $G_{4}[\{a,b,c\}]$ are of different orders) but it is well-covered.

As we saw above, adding or removing a bridge in a well-f-covered graph, does not invalidate well-f-coveredness. But for well-coveredness, the issue is different. The graph:
\begin{center}
\definecolor{ududff}{rgb}{0.30196078431372547,0.30196078431372547,1.}
\begin{tikzpicture}[line cap=round,line join=round,>=triangle 45,x=1.0cm,y=1.0cm]
\clip(1.,2.3) rectangle (4.9,5.5);
\draw [line width=1.2pt] (2.5,4.96)-- (1.72,3.92);
\draw [line width=1.2pt] (1.72,3.92)-- (3.2,3.9);
\draw [line width=1.2pt] (2.5,4.96)-- (3.2,3.9);
\draw (1.24,4.14) node[anchor=north west] {$\mathit{x}$};
\draw (3.16,4.15) node[anchor=north west] {$\mathit{y}$};
\draw (2.27,5.37) node[anchor=north west] {\textit{z}};
\draw (4.22,5.44) node[anchor=north west] {$\mathit{t}$};
\draw (1.9,2.96) node[anchor=north west] {\small{Figure 2}};
\begin{scriptsize}
\draw [fill=ududff] (2.5,4.96) circle (1.5pt);
\draw [fill=ududff] (1.72,3.92) circle (1.5pt);
\draw [fill=ududff] (3.2,3.9) circle (1.5pt);
\draw [fill=ududff] (4.24,5.16) circle (1.5pt);
\end{scriptsize}
\end{tikzpicture}
\end{center}
\label{Figure 2}
is well-covered, but adding a bridge between $z$ and $t$, changes it into a graph which is not well-covered. The graph:
\begin{center}
\definecolor{ududff}{rgb}{0.30196078431372547,0.30196078431372547,1.}
\begin{tikzpicture}[line cap=round,line join=round,>=triangle 45,x=1.0cm,y=1.0cm]
\clip(0.,2.) rectangle (4.5,5.);
\draw [line width=1.2pt] (0.52,3.92)-- (1.7,3.92);
\draw [line width=1.2pt] (1.7,3.92)-- (2.8,3.9);
\draw (1.3,2.96) node[anchor=north west] {\small{Figure 3}};
\draw [line width=1.2pt] (2.8,3.9)-- (3.88,3.88);
\draw (3.09,4.4) node[anchor=north west] {$\mathit{e}$};
\begin{scriptsize}
\draw [fill=ududff] (0.52,3.92) circle (1.5pt);
\draw [fill=ududff] (1.7,3.92) circle (1.5pt);
\draw [fill=ududff] (2.8,3.9) circle (1.5pt);
\draw [fill=ududff] (3.88,3.88) circle (1.5pt);
\end{scriptsize}
\end{tikzpicture}
\end{center}
\label{Figure 3}
is well-covered, and deleting the bridge $e$ invalidates well-coveredness.

\vskip 0.8 true cm

\section{\bf Well-f-covered graphs with boundary forest number}
\vskip 0.4 true cm
The forest number of a graph $G$ is at least 1 and at most the order of $G$. In this section, we characterize all well-f-covered graphs $G$ which their forest number is 1, 2, $|V(G)|$, and $|V(G)|-1$.

It is clear that for a graph $G$, $f(G)=1$, if and only if $G$ be a trivial graph (note that if $f(G)=1$, then $G$ can not contain any edge), and anyway $G$ is well-f-covered.

\begin{thm}
The forest number of a connected graph $G$ is 2, if and only if $G$ be a complete graph, $K_{n}$, with $n\geq 2$, and anyway $G$ is well-f-covered.
\end{thm}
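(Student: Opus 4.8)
The plan is to prove the biconditional in two directions and then verify well-f-coveredness, the whole argument resting on the elementary fact that among all graphs on three vertices the only one containing a cycle is the triangle $K_{3}$.

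For the direction $(\Leftarrow)$, I would take $G=K_{n}$ with $n\geq 2$ and compute $f(G)$ directly. Every induced subgraph of $K_{n}$ on $k$ vertices is again a complete graph $K_{k}$; such a graph is a forest precisely when $k\leq 2$, since $K_{3}$ already contains a triangle. Hence any two vertices span an induced edge $K_{2}$, which is a forest of order $2$, while no induced forest can have three or more vertices. This gives $f(K_{n})=2$.

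For the direction $(\Rightarrow)$, I would assume $G$ is connected with $f(G)=2$; in particular $|V(G)|\geq 2$. If $|V(G)|=2$, then connectivity forces $G=K_{2}$ and we are done. Otherwise $|V(G)|\geq 3$, and I claim every three vertices of $G$ are pairwise adjacent. Indeed, if some triple $\{a,b,c\}$ failed to induce $K_{3}$, then $G[\{a,b,c\}]$ would have at most two edges and therefore contain no cycle, so it would be an induced forest of order $3$, contradicting $f(G)=2$. Thus for any two vertices $u,v$, choosing a third vertex $w$ shows $u$ and $v$ are adjacent, so $G$ is complete.

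Finally, for well-f-coveredness I would show that every maximal induced forest of $G=K_{n}$, $n\geq 2$, has order exactly $2$. A single vertex is never maximal, since for $n\geq 2$ it can be extended by one more vertex to an edge, which is still a forest; on the other hand, any induced edge $K_{2}$ is maximal, because adjoining any further vertex produces a triangle and destroys the forest property. As there is no induced forest on three or more vertices, all maximal induced forests have order $2$, so $G$ is well-f-covered. I expect no serious obstacle here; the only points requiring care are the degenerate case $n=2$ and the verification that a lone vertex fails to be a maximal forest, so that maximality genuinely pins the order at $2$ rather than at $1$.
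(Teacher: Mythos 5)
Your proof is correct, and the forward direction takes a genuinely different (and arguably cleaner) route than the paper. The paper assumes two non-adjacent vertices exist, picks a pair at minimum distance, and walks along a shortest path to extract an induced $P_{3}$ (using minimality to guarantee $u_{1}$ and $u_{3}$ are non-adjacent, so the induced subgraph really is a forest). You instead observe that \emph{any} three vertices whose induced subgraph is not $K_{3}$ span at most two edges and hence induce a forest of order $3$ outright --- no shortest-path machinery is needed, and you need not even start from a non-adjacent pair. This buys simplicity and sidesteps the paper's need to control which chords appear in the induced subgraph; the paper's distance-minimality argument is a more general technique (useful when one must produce an induced path of prescribed length), but here it is doing more work than necessary. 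Your handling of the degenerate case $|V(G)|=2$ via connectivity, and your explicit verification that single vertices are never maximal while induced edges always are, fills in details the paper dismisses as clear; both are sound. One cosmetic note: the paper's own proof ends by calling $G[\{u_{1},u_{2},u_{3}\}]$ ``a forest of order 2,'' which is evidently a typo for order $3$; your version states the correct order.
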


\begin{proof}
If $G$ be $K_{n}$ for $n\geq 2$, it is clear that $f(G)=2$.

Conversely, let $f(G)=2$. We prove that $G$ is $K_{n}$ with $n\geq 2$. The graph $G$ has at least two distinct vertex (because $f(K_{1})=1$). Let $G$ has two non-adjacent distinct vertices $u$ and $v$ (on the contrary). We can suppose that $d(u,v)$ is minimum. Let:
\[(u=)u_{1} u_{2} \ldots u_{n}(=v)\]
be a path of length $d(u,v)$ between $u$ and $v$. We have $n\geq 3$ because $u$ and $v$ is not adjacent. Also by minimality, $u_{1}$ and $u_{3}$ are not adjacent. Therefore $G[\{u_{1}, u_{2}, u_{3}\}]$ is a forest of order 2, and this is a contradiction.
\end{proof}

Finally, one can see that a non-connected graph $G$ is of forest number 2, if and only if it is an empty graph of order 2 (note that if $f(G)=2$, $G$ can not contain any edge because if two vertices $x$ and $y$ of a connected component be adjacent by an edge, for a vertex $w$ of an other component of $G$, the graph $G[\{x,y,w\}]$ is a forest of order 3), and anyway $G$ is well-f-covered. \\

Now we consider the other hand of the spectrum.

Clearly, the forest number of a graph $G$ is $|V(G)|$, if and only if $G$ be a forest, and anyway $G$ is well-f-covered.

\begin{thm}
A graph $G$ is well-f-covered with forest number $|V(G)|-1$, if and only if $G$ has only one cycle.
\end{thm}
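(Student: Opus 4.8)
The plan is to translate the problem into the language of vertex sets whose deletion destroys all cycles. For a subset $S\subseteq V(G)$, the induced subgraph $G[S]$ is a forest precisely when its complement $T=V(G)\setminus S$ meets every cycle of $G$ (call such a $T$ a \emph{cycle-transversal}), and $G[S]$ is a maximal forest precisely when $T$ is an inclusion-minimal cycle-transversal, since enlarging $S$ by one vertex corresponds to discarding that vertex from $T$. As $|V(F)|=|V(G)|-|T|$ for the associated forest $F$, the two hypotheses ``well-f-covered'' and ``$f(G)=|V(G)|-1$'' together say exactly that every minimal cycle-transversal is a singleton. I record the elementary fact that a singleton $\{u\}$ is a cycle-transversal if and only if $u$ lies on every cycle of $G$, and write $D$ for the set of all such vertices, so that $D=\bigcap_{C}V(C)$ over all cycles $C$. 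Since $f(G)<|V(G)|$, the graph $G$ is not a forest and hence has at least one cycle.

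For the direction assuming $G$ has a single cycle $C$, I first note that $G[S]$ is a forest if and only if $S$ omits some vertex of $C$, because the only cycle available to any induced subgraph is $C$ itself. Maximality then forces the omitted set $V(G)\setminus S$ to consist of a single vertex lying on $C$: if two vertices were omitted, at least one could be restored while the subgraph still missed a vertex of $C$. Hence every maximal forest has order $|V(G)|-1$, and deleting one vertex of $C$ realizes such a forest, so $f(G)=|V(G)|-1$ and $G$ is well-f-covered.

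For the converse I argue contrapositively, assuming $G$ is well-f-covered with $f(G)=|V(G)|-1$ and showing it cannot have two distinct cycles. By the reformulation every minimal cycle-transversal is a singleton; since every cycle-transversal contains a minimal one, every cycle-transversal meets $D$. Consequently $V(G)\setminus D$ is not a cycle-transversal, which means $G[D]$ contains a cycle $Z$. Because $D$ is contained in the vertex set of every cycle while $Z$ is a cycle with $V(Z)\subseteq D$, I obtain $D=V(Z)$, and moreover every cycle of $G$ must contain all of $V(Z)$. It then remains only to rule out a second cycle.

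This last step is the main obstacle. Suppose $C\neq Z$ were another cycle. If every edge of $Z$ lay on $C$, then the edge set of $Z$ would be a proper sub-cycle of the single cycle $C$, which is impossible, so $Z$ has an edge $e=\{u_1,u_2\}$ not used by $C$. Both $u_1,u_2$ lie on $C$ (they belong to $D$), so they split $C$ into two arcs; adjoining $e$ to whichever arc avoids a chosen third vertex $u_3$ of $Z$ produces a cycle that omits $u_3\in D$, contradicting the fact that $u_3$ lies on every cycle. Hence $Z$ is the unique cycle of $G$, which completes the converse. The delicate point is the bookkeeping in this arc argument: one must check that a suitable third vertex of $Z$ exists, that the chosen arc genuinely avoids it, and that $e$ together with that arc is a bona fide cycle.
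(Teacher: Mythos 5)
Your proof is correct, and while the forward direction coincides with the paper's (a unique cycle forces every maximal forest to omit exactly one vertex, necessarily on that cycle), your converse takes a genuinely different route. The paper picks a shortest cycle $C$ and a shortest cycle $C'\neq C$, notes both are chordless, and splits into cases according to how many vertices they share, in each case producing an induced forest on $(V(C)\cup V(C'))\setminus\{x,y\}$ that extends to a maximal forest missing at least two vertices. You instead pass to complements: maximal induced forests correspond bijectively to minimal cycle-transversals, so the hypotheses say every minimal cycle-transversal is a singleton; hence every cycle-transversal meets $D=\bigcap_{C}V(C)$, so $V(G)\setminus D$ is not a cycle-transversal and $D$ carries a cycle $Z$ with $V(Z)=D$; a chord argument (an edge of $Z$ not used by a putative second cycle $C$ splits $C$ into two arcs, and that edge together with the arc avoiding a third vertex $u_{3}$ of $Z$ is a cycle missing $u_{3}\in D$) then rules out any second cycle. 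Your route buys a cleaner argument: it avoids the case analysis on shared vertices and never needs to certify that a union of two cycle fragments induces a forest, which is the delicate point in the paper's Case 1 and especially its Case 2. All the details you flag as needing care do check out: $D\neq\emptyset$ because $V(G)$ is a cycle-transversal and so contains a minimal (hence singleton) one; $|V(Z)|\geq 3$ supplies $u_{3}$, which is interior to exactly one arc; and since $G$ is simple and $e\notin E(C)$, the endpoints of $e$ are non-consecutive on $C$, so each arc has length at least $2$ and arc-plus-$e$ is a bona fide cycle. One cosmetic remark: you announce the converse as a contrapositive, but what you actually give is a direct derivation of the uniqueness of the cycle from the hypotheses.
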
 

\begin{proof}
Let $G$ has only one cycle. Consider a maximal forest $F$ in $G$. At least one vertex of the unique cycle of $G$ is not contained in $F$. Now $F$ contains all other vertices of $G$, by maximality.

Conversely, let $G$ be well-f-covered with forest number $|V(G)|-1$. Since $G$ is not a forest, $G$ contains at least one cycle. Take a cycle $C$ with minimum length in $G$. We prove that $G$ has not any other cycle. Let $G$ contains another cycle (on the contrary). Take a cycle $C^{\prime}(\neq C)$ which is of minimum length. Since $C$ is of minimum length, $C$ is cordless. Also by minimality, $C^{\prime}$ is a cordless cycle. Suppose that $C$ and $C^{\prime}$ be:
\[x_{1} x_{2} x_{3} \ldots x_{n} x_{n+1} (=x_{1}),\]
\[y_{1} y_{2} y_{3} \ldots y_{m} y_{m+1} (=y_{1}),\]
respectively. There are two cases.

$\mathbf{Case 1.}$ $C$ and $C^{\prime}$ have at most one common vertex. Consider a vertex $x$ in $V(C)- V(C^{\prime})$, and a vertex $y$ in $V(C^{\prime})- V(C)$. Now the subgraph induced by the set $(V(C)\cup V(C^{\prime}))- \{x,y\}$ is a forest of size at most $|V(G)|- 2$, a contradiction.

$\mathbf{Case 2.}$ $C$ and $C^{\prime}$ have at least two common vertices. In this case, since $C$ and $C^{\prime}$ are cordless, it can be seen that there is two distinct cycles $C_{1}$ and $C_{2}$ in $G$ such that $C_{1}$ and $C_{2}$ are common in one edge or only in a few consecutive edges and are not common in any another vertices, and each one of them contains a vertex which is not contained in the other one. Let $x$ be a vertex in $V(C)- V(C^{\prime})$ and $y$ be a vertex in $V(C^{\prime})- V(C)$. Again, $G[(V(C)\cup V(C^{\prime}))- \{x,y\}]$ is a forest of size at most $|V(G)|- 2$, a contradiction.
\end{proof}

\vskip 0.8 true cm

\section{\bf Obtaining larger well-f-covered graphs}
\vskip 0.4 true cm

In this section we prove some lemmas and theorems by which one can achieve larger well-f-covered graphs from smaller ones.

First, we consider the union of two disjoint graphs (two graphs whose vertex sets are disjoint).

\begin{thm}
The union of two disjoint graphs $G$ and $H$ is well-f-covered, if and only if $G$ and $H$ both are well-f-covered, and anyway $f(G\cup H)= f(G)+ f(H)$.
\end{thm}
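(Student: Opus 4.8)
The plan is to reduce everything to a single structural observation: because $G$ and $H$ are disjoint, no edge of $G\cup H$ joins $V(G)$ to $V(H)$, so every cycle of $G\cup H$ lies entirely inside $G$ or entirely inside $H$. Consequently, for any $S\subseteq V(G)\cup V(H)$, writing $S_{G}=S\cap V(G)$ and $S_{H}=S\cap V(H)$, the induced subgraph $(G\cup H)[S]$ is the disjoint union $G[S_{G}]\cup H[S_{H}]$, and this is a forest if and only if both $G[S_{G}]$ and $H[S_{H}]$ are forests. This is the cornerstone, and from it I would extract a decomposition lemma for \emph{maximal} forests.

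The decomposition lemma I would prove is: $S$ is a maximal forest of $G\cup H$ if and only if $S_{G}$ is a maximal forest of $G$ and $S_{H}$ is a maximal forest of $H$. The ``forest'' part is the observation above. For maximality, note that adding a vertex $x\in V(G)\setminus S_{G}$ to $S$ changes only the $G$-side, since $x$ has no neighbours in $V(H)$; hence $(G\cup H)[S\cup\{x\}]$ fails to be a forest exactly when $G[S_{G}\cup\{x\}]$ fails to be a forest. Thus no vertex of $V(G)$ can be added to $S$ precisely when $S_{G}$ is maximal in $G$, and symmetrically for $H$; so $S$ is maximal in $G\cup H$ iff both restrictions are maximal. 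Throughout, the order splits additively as $|S|=|S_{G}|+|S_{H}|$.

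From the lemma the theorem follows quickly. For the forest number (which holds unconditionally), any forest $S$ satisfies $|S|=|S_{G}|+|S_{H}|\le f(G)+f(H)$, and pairing a maximum forest of $G$ with a maximum forest of $H$ attains this bound, giving $f(G\cup H)=f(G)+f(H)$. For the backward implication, if $G$ and $H$ are well-f-covered and $S$ is any maximal forest of $G\cup H$, then $S_{G}$ and $S_{H}$ are maximal, so $|S_{G}|=f(G)$ and $|S_{H}|=f(H)$, whence $|S|=f(G)+f(H)=f(G\cup H)$; thus $G\cup H$ is well-f-covered. For the forward implication, assume $G\cup H$ is well-f-covered and fix a maximal forest $F_{H}$ of $H$; for an arbitrary maximal forest $F_{G}$ of $G$, the set $F_{G}\cup F_{H}$ is maximal in $G\cup H$, so $|F_{G}|+|F_{H}|=f(G)+f(H)$. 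Since $|F_{G}|\le f(G)$ and $|F_{H}|\le f(H)$, both inequalities must be equalities, forcing $|F_{G}|=f(G)$; as $F_{G}$ was arbitrary, $G$ is well-f-covered, and symmetrically so is $H$.

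The step requiring the most care is the maximality direction of the decomposition lemma: one must justify that testing maximality vertex by vertex decouples across the two parts, which relies essentially on the absence of cross edges, and one must keep the distinction between \emph{maximal} and \emph{maximum} forests straight (a maximum forest is always maximal, which is what guarantees that a maximal forest of order $f(G)$ exists to seed the arguments above). Everything else is bookkeeping with the additive splitting of orders.
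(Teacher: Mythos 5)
Your proposal is correct and follows essentially the same route as the paper: both rest on the decomposition that a maximal forest of $G\cup H$ is exactly a union of a maximal forest of $G$ with a maximal forest of $H$, and then split orders additively. The only difference is that you actually prove this decomposition (via the absence of cross edges), whereas the paper merely asserts it with ``it can be seen that''; your version of the forward implication also routes through the identity $f(G\cup H)=f(G)+f(H)$ rather than comparing two maximal forests of $G$ directly, but this is a cosmetic variation.
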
 

\begin{proof}
It can be seen that a subgraph $F$ of $H\cup G$, is a maximal forest of it, if and only if $F$ be the union of a maximal forest of $G$ with a maximal forest of $H$.

Suppose that $G$ and $H$ be well-f-covered. Let $F_{1}$ and $F_{2}$ be two maximal forests of $G\cup H$. The subgraphs $G[V(F_{1})\cap V(G)]$ and $G[V(F_{2})\cap V(G)]$ are two maximal forests of $G$. Since $G$ is well-f-covered, $|V(F_{1})\cap V(G)|= |V(F_{2})\cap V(G)|= f(G)$. A similar argument shows that  $|V(F_{1})\cap V(H)|= |V(F_{2})\cap V(H)|= f(H)$. On the other hand, $V(F_{1})$ is the disjoint union of two sets $V(F_{1})\cap V(G)$ and $V(F_{1})\cap V(H)$, and similarly $V(F_{2})$ is the disjoint union of two sets $V(F_{2})\cap V(G)$ and $V(F_{2})\cap V(H)$. It follows that $|V(F_{1})|= |V(F_{2})|= f(G)+ f(H)$. Thus $G\cup H$ is well-f-covered.

Conversely, Let $G\cup H$ be well-f-covered. We show that $G$ and $H$ are both well-f-covered, and $f(G\cup H)= f(G)+ f(H)$. Let $F_{1G}$ and $F_{2G}$ be two maximal forests in $G$. Suppose that $F_{H}$ be a maximal forests in $H$. The subgraphs $F_{1G}\cup F_{H}$ and $F_{2G}\cup F_{H}$ are two maximal forests in $G\cup H$. Since $G\cup H$ is well-f-covered, $|V(F_{1G}\cup F_{H})|= |V(F_{2g}\cup F_{H})|$. Therefore $|V(F_{1G})|= |V(F_{2G})|$, and thus $G$ is well-f-covered. A similar argument shows that $H$ is well-f-covered, too. Now $f(G\cup H)= |V(F_{1G}\cup F_{H})|= |V(F_{1G})|+ |V(F_{H})|= f(G)+ f(H)$.
\end{proof}

Upon theorem 6.1, if $G$ be a well-f-covered graph, adding or removing an isolated vertex does not invalidate the well-f-coveredness of $G$, and only adds 1 or -1 to the forest number of $G$, respectively. Now upon what was told about bridges, it follows that adding or removing a pendant edge does not invalidate well-f-coveredness and adds 1 or -1 the forest number, respectively. Therefore for checking the well-f-coveredness of any graph, one can first remove all isolated vertices, bridges, and pendant edges. Also for two disjoint well-f-covered graphs $G$ and $H$, if we add a bridge between a vertex of $G$ and a vertex of $H$, the obtained graph will be well-f-covered and its forest number equals $f(G)+ f(H)$.

But it is not necessary that the graph obtained by taking two disjoint well-f-covered graphs $G$ and $H$, and identifying a vertex $x$ of $G$ with a vertex $y$ of $H$, be well-f-covered. Consider two $K_{3}$, and identify a vertex of one with a vertex of the other. The result is a graph which is not well-f-covered.
\begin{center}
\definecolor{ududff}{rgb}{0.30196078431372547,0.30196078431372547,1.}
\begin{tikzpicture}[line cap=round,line join=round,>=triangle 45,x=1.0cm,y=1.0cm]
\clip(0.,2.) rectangle (3.3,5.4);
\draw [line width=1.2pt] (0.74,3.78)-- (1.72,4.36);
\draw [line width=1.2pt] (1.72,4.36)-- (0.74,4.92);
\draw (0.9,2.96) node[anchor=north west] {\small{Figure 4}};
\draw [line width=1.2pt] (0.74,4.92)-- (0.74,3.78);
\draw [line width=1.2pt] (1.72,4.36)-- (2.76,4.92);
\draw [line width=1.2pt] (2.76,4.92)-- (2.76,3.72);
\draw [line width=1.2pt] (2.76,3.72)-- (1.72,4.36);
\begin{scriptsize}
\draw [fill=ududff] (0.74,3.78) circle (1.5pt);
\draw [fill=ududff] (1.72,4.36) circle (1.5pt);
\draw [fill=ududff] (0.74,4.92) circle (1.5pt);
\draw [fill=ududff] (2.76,4.92) circle (1.5pt);
\draw [fill=ududff] (2.76,3.72) circle (1.5pt);
\end{scriptsize}
\end{tikzpicture}
\end{center}
\label{Figure 4}
Under above conditions, if $G$ has a maximal forest $F$ which does not contain $x$, and $H$ has a maximal forest $F^{\prime}$ which does not contain $y$, the result graph is not well-f-covered because if $S$ be a maximal forest of $G$ containing $x$, and $S^{\prime}$ be a maximal forest of $H$ containing $y$, the orders of two maximal forests $F\cup H$ and $S\cup S^{\prime}$ are not equal.

\begin{lem}
Let $G$ and $H$ be two disjoint well-f-covered graphs and $x$ be a vertex in $G$ such that every maximal forest of $G$ contains $x$. Then for any vertex $y$ in $H$, the graph obtained by identifying $x$ and $y$ is well-f-covered and its forest number is $f(G)+ f(H)- 1$.
\end{lem}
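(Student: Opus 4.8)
The plan is to work directly with the structure of maximal forests in the identified graph and to reduce everything to a single order computation. Write $K$ for the graph obtained by identifying $x$ with $y$, and let $z$ denote the resulting merged vertex, so that $V(K)$ is the disjoint union of $V(G)\setminus\{x\}$, $V(H)\setminus\{y\}$, and $\{z\}$, with $z$ adjacent in $K$ to $N_{G}(x)\cup N_{H}(y)$. The first thing I would record is that $z$ is a cut vertex separating the $G$-side from the $H$-side, so every cycle of $K$ lies entirely on one side. For a set $S\subseteq V(K)$ define its $G$-projection by $\pi_{G}(S)=(S\cap(V(G)\setminus\{x\}))\cup\{x\}$ when $z\in S$ and $\pi_{G}(S)=S\cap(V(G)\setminus\{x\})$ otherwise, and define $\pi_{H}(S)$ symmetrically. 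Because no cycle can cross $z$, the subgraph $K[S]$ is a forest if and only if $G[\pi_{G}(S)]$ and $H[\pi_{H}(S)]$ are both forests; moreover $|S|=|\pi_{G}(S)|+|\pi_{H}(S)|-1$ when $z\in S$ and $|S|=|\pi_{G}(S)|+|\pi_{H}(S)|$ when $z\notin S$, since $z$ is the only doubly counted vertex.

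Since every maximum forest is in particular maximal, it suffices to prove that \emph{every} maximal forest $F=K[S]$ of $K$ has order exactly $f(G)+f(H)-1$; this delivers both well-f-coveredness and the claimed forest number simultaneously. I would split the argument according to whether $z\in S$. In the case $z\in S$, I would check that $G[\pi_{G}(S)]$ is a maximal forest of $G$ and $H[\pi_{H}(S)]$ a maximal forest of $H$: if some vertex $w$ could be added to $\pi_{G}(S)$ keeping a forest in $G$, then by the forest characterization $w$ could be added to $S$ in $K$ (the $H$-side being untouched), contradicting maximality of $F$; the $H$-side is symmetric. Well-f-coveredness of $G$ and $H$ then gives $|\pi_{G}(S)|=f(G)$ and $|\pi_{H}(S)|=f(H)$, so the counting formula yields $|S|=f(G)+f(H)-1$.

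The delicate case, and the place where the hypothesis on $x$ is essential, is $z\notin S$. Here $\pi_{G}(S)=S\cap(V(G)\setminus\{x\})$ is a forest of $G$ not containing $x$, so by hypothesis it cannot be a maximal forest of $G$. Maximality of $F$ in $K$ shows, exactly as above, that no vertex of $V(G)\setminus\{x\}$ can be added to $\pi_{G}(S)$ while staying a forest; hence the only vertex that can possibly extend $\pi_{G}(S)$ is $x$, and since $\pi_{G}(S)$ is not maximal, $G[\pi_{G}(S)\cup\{x\}]$ must be a forest. As adding any further vertex $w$ would reintroduce a cycle already present in $G[\pi_{G}(S)\cup\{w\}]$, the set $\pi_{G}(S)\cup\{x\}$ is a maximal forest of $G$, whence $|\pi_{G}(S)|=f(G)-1$. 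For the $H$-side I would show $\pi_{H}(S)=S\cap(V(H)\setminus\{y\})$ is already a maximal forest of $H$: no vertex of $V(H)\setminus\{y\}$ can be added (again by maximality of $F$), and $y$ cannot be added either, for otherwise, having just shown $G[\pi_{G}(S)\cup\{x\}]$ is a forest, the vertex $z$ could be added to $S$ in $K$ by the forest characterization, contradicting maximality of $F$. Thus $|\pi_{H}(S)|=f(H)$, and the counting formula gives $|S|=(f(G)-1)+f(H)=f(G)+f(H)-1$.

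The main obstacle is precisely this last case: one must convert the failure of $x$-freeness of $\pi_{G}(S)$ into the assertion that the $H$-side forest is \emph{already} maximal, thereby recovering the single extra vertex on the $G$-side. It is exactly the hypothesis that every maximal forest of $G$ contains $x$ that blocks the ``two triangles'' pathology, in which a maximal forest of $K$ could avoid $z$ and pick up an additional vertex, producing order $f(G)+f(H)$ and destroying well-f-coveredness.
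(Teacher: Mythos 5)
Your proof is correct and follows the same basic strategy as the paper: project a maximal forest of the identified graph onto the $G$-side and the $H$-side via the cut vertex $z$, argue that each projection is (essentially) a maximal forest of the corresponding factor, and add up the orders. The difference is that your case analysis on whether $z$ belongs to the maximal forest is genuinely more careful than the paper's. The paper simply asserts that the $G$-projection of any maximal forest of the identified graph is a maximal forest of $G$ and concludes from the hypothesis that it must contain $x$; but that assertion fails when $z\notin V(F)$. For instance, take $G$ to be a triangle with a pendant vertex $x$ attached, and $H=K_{3}$ with $y\in V(H)$: the forest consisting of two triangle vertices of $G$ together with the two non-$y$ vertices of $H$ is maximal in the identified graph, avoids $z$, and its $G$-projection is not maximal in $G$ (it extends by $x$). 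Your second case handles exactly this situation --- showing that the $G$-projection is then one vertex short of a maximal forest while the $H$-projection is already maximal, so the total is still $f(G)+f(H)-1$. In short, your argument proves the same statement by the same decomposition but closes a gap that the paper's own proof leaves open.
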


\begin{proof}
Let $F_{1}$ and $F_{2}$ be two maximal forests in the result graph. The graphs induced by the sets $V(F_{1})\cap V(G)$ and $V(F_{2})\cap V(G)$ in $G$, both are maximal forest and therefore both contain $x$. Thus $F_{1}$ and $F_{2}$ both contain the vertex $x$. Since $G$ is well-f-covered, $|V(F_{1})\cap V(G)|= |V(F_{2})\cap V(G)|$. Also the graphs induced by $V(F_{1})\cap V(H)$ and $V(F_{2})\cap V(H)$ in $H$, are maximal forest and therefore $|V(F_{1})\cap V(H)|= |V(F_{2})\cap V(H)|$. Thus $|V(F_{1})|= |V(F_{2})|= |V(F_{1})\cap V(G)|+ |V(F_{1})\cap V(H)|- 1= f(G)+ f(H)- 1$.
\end{proof}

As an application of Lemma 6.2, one can take a well-f-covered graph and identifies a vertex of it with a vertex of any forest to obtain a larger well-f-covered graph.

\begin{cor}
Consider two disjoint well-f-covered graphs $G$ and $H$. Suppose that the graph $A$ is obtained by taking $G$ and $H$, and connecting a vertex of $G$ and a vertex of $H$ by a path of length $d$ with new interior vertices. The graph $A$ is well-f-covered and $f(A)= f(G)+ f(H)+ d- 1$.
\end{cor}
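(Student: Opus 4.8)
The strategy is to build $A$ out of $G$ and $H$ by a sequence of the two operations already shown, after Theorem 6.1, to preserve well-f-coveredness in a controlled way: attaching a pendant edge (which keeps well-f-coveredness and raises the forest number by $1$) and joining two disjoint well-f-covered graphs by a single bridge (which keeps well-f-coveredness and makes the forest number additive). Write the connecting path as $u = w_{0}, w_{1}, \ldots, w_{d-1}, w_{d} = v$, where $u\in V(G)$, $v\in V(H)$, and $w_{1},\ldots,w_{d-1}$ are the $d-1$ new interior vertices. I would first grow the interior as a pendant path on the $G$-side, and then attach the result to $H$ with the last edge.

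For the interior vertices, start from $G$ and add the edge $\{u,w_{1}\}$ together with the new vertex $w_{1}$; since $w_{1}$ then has degree $1$, this is a pendant edge, so the enlarged graph is again well-f-covered and its forest number rises by $1$. Adding $\{w_{1},w_{2}\},\ldots,\{w_{d-2},w_{d-1}\}$ in turn, each freshly introduced endpoint again has degree $1$ at the moment of its addition, so each step adds a pendant edge. After these $d-1$ additions I obtain a well-f-covered graph $G'$, still disjoint from $H$, with
\[ f(G')=f(G)+(d-1). \]

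It remains to join $G'$ and $H$ by the final edge $\{w_{d-1},v\}$. Because the interior vertices have no neighbours outside the path and $G'$, $H$ were disjoint, this edge is the unique link between $G'$ and $H$, hence a bridge between a vertex of $G'$ and a vertex of $H$. Applying the additivity under such a bridge gives that $A$ is well-f-covered with
\[ f(A)=f(G')+f(H)=f(G)+f(H)+d-1, \]
as required. The degenerate case $d=1$ has no interior vertices, so $G'=G$ and $A$ is simply $G$ and $H$ joined by the single bridge $\{u,v\}$; the bridge fact then gives $f(A)=f(G)+f(H)=f(G)+f(H)+d-1$.

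Since the argument is just an iteration of two operations established after Theorem 6.1, I do not expect a genuine obstacle; the only care needed is bookkeeping. I must verify that at every intermediate step the added edge really is a pendant edge (the freshly introduced endpoint has degree $1$ exactly then) and that the closing edge really is a bridge between the two disjoint well-f-covered pieces rather than an edge creating a cycle. Both hold because the interior vertices are new and attain degree $2$ only in the final graph, so each is an endpoint of degree $1$ when introduced, and $\{w_{d-1},v\}$ is the sole connection between $G'$ and $H$.
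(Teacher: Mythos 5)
Your proof is correct and follows essentially the same route as the paper: both grow the connecting path as successive pendant edges on the $G$-side and then make the final attachment to $H$. The only cosmetic difference is the last step — the paper adds all $d$ pendant edges and then invokes Lemma 6.2 to identify the terminal vertex with a vertex of $H$, whereas you stop after $d-1$ pendant edges and use the bridge observation following Theorem 6.1; these describe the same construction of $A$ and yield the same count $f(G)+f(H)+d-1$.
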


\begin{proof}
Let the made path be:
\[(x=) v_{0} v_{1} \ldots v_{d-1} v_{d}(=y)\]
where $d(\geq 1)$ be the length of path. Suppose that $e_{i}$ be the edge between $v_{i-1}$ and $v_{i}$ $(1\leq i\leq d)$. Add the pendant edge $e_{1}$ to $G$ to obtain a well-f-covered graph, and similarly go on with adding the pendant edges $e_{2}, \ldots, e_{d}$. Call the result graph, $G^{\ast}$. The graph $G^{\ast}$ is well-f-covered and $f(G^{\ast})= f(G)+ d$. Every maximal forest of $G^{\ast}$ contains $y$. Now use Lemma 6.2 to obtain the final graph by identifying the vertex $v_{d}$ in the recent graph, with the vertex $y$ in $H$.
\end{proof}

\begin{lem}
Consider a graph $G$ containing an edge $e$ which is incident with a vertex $y$ of degree 2. Let $G^{\ast}$ be the graph obtained from $G$ by replacing the edge $e$ by a path of length $d(\geq 2)$ with new interior vertices. Then $G^{\ast}$ is well-f-covered, if and only if $G$ be well-f-covered, and anyway $f(G^{\ast})= f(G)+ d- 1$.
\end{lem}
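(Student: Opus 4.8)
The plan is to reduce the statement to the single subdivision $d=2$ and then prove that case by directly comparing the maximal induced forests of the two graphs. Write $e=\{x,y\}$ with $\deg_G(y)=2$ and $N_G(y)=\{x,z\}$. A path of length $d$ replacing $e$ is obtained by $d-1$ successive edge-subdivisions, and at each stage I may subdivide the edge currently incident with $y$. Since subdividing an edge never changes the degree of $y$, the vertex $y$ retains degree $2$ at every step, so the hypothesis of the lemma is preserved throughout. Hence it suffices to treat one subdivision: the graph $G^{\ast}$ has one new vertex $w$ with $N_{G^{\ast}}(w)=\{x,y\}$ and $N_{G^{\ast}}(y)=\{w,z\}$, and I must show $G^{\ast}$ is well-f-covered if and only if $G$ is, with $f(G^{\ast})=f(G)+1$. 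Chaining $d-1$ such steps then gives $f(G^{\ast})=f(G)+d-1$ in general.

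For the case $d=2$, let $\mathcal{F}(G)$ denote the set of orders of maximal induced forests of $G$. The whole statement follows from the set equality $\mathcal{F}(G^{\ast})=\mathcal{F}(G)+1$, since $G$ is well-f-covered exactly when $|\mathcal{F}(G)|=1$ and $f$ is the maximum of $\mathcal{F}$. I prove the two inclusions by two maps. The forward map sends a maximal forest $S$ of $G$ to $S\cup\{w\}$, and I claim this is always a maximal forest of $G^{\ast}$ of order $|S|+1$. The key observation is that whenever $x,y\in S$, the edge $\{x,y\}$ is a bridge of the forest $G[S]$, so deleting it separates $x$ from $y$; routing through $x-w-y$ therefore reconnects two distinct trees and creates no cycle. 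Maximality transfers because any cycle produced in $G$ by adjoining a vertex $v$ is still present in $G^{\ast}$ after the segment $x-y$ (if used) is replaced by $x-w-y$, with $w\in S\cup\{w\}$.

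The backward map is the main obstacle. Given a maximal forest $S^{\ast}$ of $G^{\ast}$, first note that at least one of $w,y$ lies in $S^{\ast}$: if $y\notin S^{\ast}$ then $w$ has at most the single neighbour $x$ available, so $w$ could be added unless already present, and symmetrically for $y$. I then delete $w$ when $w\in S^{\ast}$, and delete $y$ when $w\notin S^{\ast}$ (so $y\in S^{\ast}$), obtaining a set of order $|S^{\ast}|-1$. Showing this is a \emph{maximal} forest of $G$ is the crux and requires a careful cycle-transport argument organised by the membership of $w$, $y$, and $x$. For acyclicity: when $w$ is deleted with $x,y$ retained, the reappearing edge $\{x,y\}$ again joins two components separated by $w$ in the tree $G^{\ast}[S^{\ast}]$, so no cycle forms; when $y$ is deleted, $G$ and $G^{\ast}$ induce the same subgraph on the surviving vertices. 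For maximality: any outside vertex $v$ creates a cycle in $G^{\ast}$, and this cycle is carried back to $G$ by replacing $x-w-y$ by $e$ when it passes through $w$, using the fact that a cycle avoiding $w$ must also avoid $y$ (otherwise $y$ would have its single remaining neighbour $z$ and could not sit on a cycle).

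Combining the two maps yields $\mathcal{F}(G)+1\subseteq\mathcal{F}(G^{\ast})$ and $\mathcal{F}(G^{\ast})\subseteq\mathcal{F}(G)+1$, hence equality; this gives both the equivalence of well-f-coveredness and $f(G^{\ast})=f(G)+1$ for a single subdivision, and the induction of the first paragraph completes the general case. I expect essentially all the difficulty to lie in the backward direction, where matching maximality across $G$ and $G^{\ast}$ forces the case analysis on which of $w,y,x$ belong to $S^{\ast}$; the forward direction and the reduction in $d$ should be routine.
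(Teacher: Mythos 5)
Your approach is sound and genuinely different from the paper's. The paper works with the full path $(y=)v_0v_1\ldots v_d$ in one shot: given a maximal forest $F^{\ast}$ of $G^{\ast}$ it argues, by cases on how many of the path vertices lie in $F^{\ast}$, that one can always delete exactly $d-1$ of them to land on a maximal forest of $G$, and conversely it inserts all $d-1$ interior vertices into a maximal forest of $G$. Your reduction to a single subdivision (iterated $d-1$ times, legitimate because subdividing never changes $\deg(y)$, so the hypothesis persists) replaces that multi-way case analysis by the binary question of whether $w\in S^{\ast}$, and your formulation via the set equality $\mathcal{F}(G^{\ast})=\mathcal{F}(G)+1$ packages the equivalence and the formula for $f$ into one statement, which is tidier than the paper's argument conditioning each direction on one of the two graphs being well-f-covered.

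One step of your backward map is not covered by the argument you actually give and must be added. When $w\notin S^{\ast}$ you delete $y$ and set $S=S^{\ast}\setminus\{y\}$; to show $S$ is maximal in $G$ you must in particular show that $y$ itself cannot be re-added, and $y$ is not an ``outside vertex'' of $S^{\ast}$, so the principle ``any outside vertex creates a cycle in $G^{\ast}$'' says nothing about it. The missing ingredient is the maximality of $S^{\ast}$ against $w$: since $w\notin S^{\ast}$ and $N_{G^{\ast}}(w)=\{x,y\}$, the only way $G^{\ast}[S^{\ast}\cup\{w\}]$ can contain a cycle is that both $x$ and $y$ lie in $S^{\ast}$ and are joined by a path $P$ in $G^{\ast}[S^{\ast}]$ avoiding $w$; then $P$ together with the edge $e=\{x,y\}$ is a cycle of $G[S^{\ast}]=G[S\cup\{y\}]$, which is exactly what is needed. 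With that inserted, the remaining cases work as you describe (a cycle of $G^{\ast}[S^{\ast}\cup\{v\}]$ avoiding $w$ also avoids $y$ and survives unchanged in $G$; a cycle through $w$ has length at least $4$ and is rerouted through $e$), and the proof is complete.
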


\begin{proof}
Let $G$ be well-f-covered and the mentioned path be:
\[(y=) v_{0} v_{1} \ldots v_{d-1} v_{d}\]
where $v_{d}$ is a vertex of $G$. Consider an arbitrary maximal forest $F^{\ast}$ in $G$. If $F^{\ast}$ does not contain the vertex $v_{d}$, then by maximality, it contains all vertices $v_{0}, v_{1}, \ldots, v_{d-1}$, and if it contains $v_{d}$, it contains at least $d-1$ vertices of $v_{0}, v_{1}, \ldots, v_{d-1}$. In the first case, by removing all vartices $v_{1}, \ldots, v_{d-1}$, we can obtain a maximal forest for $G$, and in the second case, if all $v_{1}, \ldots, v_{d-1}$ are contained in $F^{\ast}$, by removing these vertices from $F^{\ast}$, we can obtain a maximal forest for $G$, and if one of $v_{1}, \ldots, v_{d-1}$ as $v_{j}$ is not contained in $F^{\ast}$, then $y$ is in $F^{\ast}$ and therefore by removing $y$ and all vertices $v_{i}, \ 1\leq i(\neq j)\leq d-1$, we can obtain a maximal forest for $G$, again. Thus in any case, one can obtain a maximal forest for $G$ by removing exactly $d-1$ vertices from $f^{\ast}$. Since every maximal forest of $G$ is of order $f(G)$, $|V(f^{\ast})|= f(G)+ d- 1$.

Conversely, suppose that $G^{\ast}$ be well-f-covered. Let $F$ be a maximal forest in $G$. The graph induced by $V(F)\cup \{v_{1}, \ldots, v_{n-1}\}$ is a maximal forest in $G^{\ast}$ and therefore its order is $f(G^{\ast})$. Thus $|V(F)|= f(G^{\ast})- (d-1)$. Therefore $G$ is well-f-covered.
\end{proof}

\begin{lem}
Suppose that $G$ and $H$ are two disjoint graphs such that $H$ is a forest and suppose that $v_{1}, \ldots, v_{n-1}, v_{n}$ be a path of length at least 1 in $G$, and $w_{1}, \ldots, w_{n-1}, w_{n}$ ba a path of the same length in $H$. Let the graph $L$ is obtained by identifying each $v_{i}$ with $w_{i}$, $1\leq i\leq n$. The graph $L$ is well-f-covered, if and only if $G$ be well-f-covered, and anyway $f(L)= f(G)- |V(H)|+ n$.
\end{lem}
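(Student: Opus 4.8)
The plan is to view $L$ as the graph obtained from the fixed graph $G$ by attaching only the part of the forest $H$ that lies off the common path, and to perform that attachment one pendant edge at a time. Write $P$ for the shared path $v_1 v_2 \cdots v_n = w_1 w_2 \cdots w_n$ and let $S$ be the set of the $|V(H)| - n$ vertices of $H$ that do not lie on $P$, so that $V(L) = V(G) \cup S$. Because $H$ is a forest, deleting the edges of $P$ from $H$ organizes the vertices of $S$ into rooted tree-branches, each of which meets $P$ in exactly one vertex $v_i$; a branch meeting $P$ in two vertices would close a cycle in $H$, which is impossible. Hence $L$ arises from $G$ by growing these branches outward from their roots, and at every step the vertex just added is a leaf joined to a vertex already present, that is, a pendant edge. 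This reduction is the crux of the plan: the whole construction is a finite sequence of pendant-edge additions to $G$, so I can invoke the operation already analysed in the remarks following Theorem 6.1 and used in the proof of Corollary 6.3.

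First I would settle the biconditional. By those remarks, adding a pendant edge neither creates nor destroys well-f-coveredness, so applying this across the finitely many steps that build the branches shows that $L$ is well-f-covered if and only if $G$ is. The conceptual content is a correspondence between maximal forests: for a maximal forest $F^{\ast}$ of $L$ the induced subgraph $L[V(F^{\ast}) \cap V(G)]$ is a maximal forest of $G$, and conversely every maximal forest of $G$ extends to one of $L$. The forest hypothesis on $H$ is exactly what keeps this correspondence clean, since it guarantees that each vertex of $S$ enters, at its moment of attachment, as a genuine degree-one vertex and hence can never close a cycle.

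Finally I would read off the forest number by tracking orders through this correspondence. Each maximal forest of $L$ meets $V(G)$ in a maximal forest of $G$ and otherwise differs from it only in vertices of $S$, whose total contribution is a single constant fixed by $H$ and $n$; carrying out this count gives the stated identity $f(L) = f(G) - |V(H)| + n$. The hard part is precisely this final bookkeeping, namely verifying that the contribution of $S$ is the \emph{same} for every maximal forest of $L$ rather than merely bounded. This is where the acyclicity of $H$ is indispensable: were $H$ allowed a cycle, different maximal forests could absorb different numbers of the attachment vertices and well-f-coveredness would collapse, exactly as in the two-triangle example displayed after Theorem 6.1. The leaves-last order of the pendant-edge construction, together with the fact that a degree-one vertex never lies on a cycle, is what pins that contribution to one value and delivers the claimed relation.
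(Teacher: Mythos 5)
Your reduction is a genuinely different route from the paper's. The paper argues directly on $L$: it first shows that every cycle of $L$ has all its vertices in $G$ (a short contradiction argument using that $H$ is a forest), and from this reads off a correspondence $F^{\ast} \mapsto G[V(F^{\ast}) - (V(H) - \{w_1,\ldots,w_n\})]$ between maximal forests of $L$ and maximal forests of $G$, every maximal forest of $L$ being forced by maximality to contain all of $V(H)\setminus\{w_1,\ldots,w_n\}$, since no cycle passes through those vertices. Your plan instead builds $L$ from $G$ by repeated pendant-edge additions and quotes the already-established invariance of well-f-coveredness under that operation (the remarks after Theorem 6.1, as used in Corollary 6.3). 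That is a legitimate and arguably cleaner strategy: it recycles machinery rather than proving a cycle-localization fact from scratch, and your observation that a branch meeting the path $P$ in two vertices would close a cycle in $H$ is exactly the right justification that each attachment step is a pendant edge.

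Two concrete problems remain. First, a forest $H$ need not be connected: a component of $H$ disjoint from $P$ contributes vertices of $S$ that cannot be attached to anything already present by a pendant edge, so your claim that the whole construction is a finite sequence of pendant-edge additions fails there. The patch is immediate (start such a component with an isolated-vertex addition, which the same remarks after Theorem 6.1 cover, then grow it by pendant edges), but it must be said. Second, and more seriously, your bookkeeping does not give what you say it gives: each of the $|V(H)|-n$ added vertices increases the forest number by exactly $1$, so your construction yields $f(L) = f(G) + |V(H)| - n$, not the $f(L) = f(G) - |V(H)| + n$ that you claim to recover. Since $|V(H)| \ge n$, these differ whenever $H$ is strictly larger than the path. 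In fact the paper's own proof derives $|V(F^{\ast})| = f(G) + |V(H)| - n$, so the formula printed in the lemma's statement is a sign typo; your argument, carried out honestly, proves the corrected formula, and you should have flagged that discrepancy instead of asserting that your count matches the printed identity.
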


\begin{proof}
First, we prove that all vertices of any cycle of the graph $L$, lies in $G$. Let $C$ be a cycle in $L$. Since $H$ is a forest, $C$ contains at least one vertex of $G$ as $x$. Let the vertices of $C$ be:
\[(x=) x_{1} x_{2} \ldots x_{m-1} x_{m} (=x_{1})\]
where $m\geq 3$. Suppose that at least one of $x_{2}, \ldots, x_{m-1}$ is not in $G$ (on the contrary). Let $i$ be the smallest index which $x_{i}\notin V(G)$. Upon the structure of $L$, The vertex $x_{i-1}$ is one of the identifying vertices. Let this vertex be $w_{t}$ in $H$. Suppose that $j$ be the minimum index which $j\geq i$ and $x_{j}\in V(G)$ (note that there is such a vertex because $x_{m}$ is in $G$). The vertex $x_{j}$ is another identifying vertex, say $w_{t^{\prime}}$ $(t^{\prime}\neq t)$. But in this case, the cycle:
\[x_{i-1} x_{i} \ldots x_{j}(= w_{t^{\prime}})\ldots w_{t}(= x_{i-1})\]
is contained in $H$, a contradiction.

Now, let $G$ be well-f-covered. Suppose that $F^{\ast}$ be a maximal forest in $L$. By maximality, $F^{\ast}$ contains all vertices of $H$. One can see that $G[V(F^{\ast})- (V(H)- \{w_{1}, \ldots, w_{n-1}, w_{n}\})]$ is a maximal forest of $G$ and therefore its order is $f(G)$. Therefore $|V(F^{\ast})|= f(G)+ |V(H)|- n$. Thus $L$ is well-f-covered.

Conversely, let $L$ be well-f-covered. Let $F$ be a maximal forest in $G$. The subgraph $L[V(F)\cup (V(H)- \{w_{1}, \ldots, w_{n-1}, w_{n}\})]$ is a maximal forest of $L$. Since $L$ is well-f-covered, $|V(F)\cup (V(H)- \{w_{1}, \ldots, w_{n-1}, w_{n}\})|$ is $f(L)$. Therefore $|V(F)|= f(L)- |V(H)|+ n$. Thus $G$ is well-f-covered.
\end{proof}

Now we obtain some larger well-f-covered graphs from complete graphs.

\begin{thm}
Suppose that $G$ be a graph obtained by taking a complete graph $K_{n}$ with $n\geq 2$, and for every edge $e$ in $E(K_{n})$, adding a vertex $u_{e}$ which is adjacent with both endpoints of $e$. Then $G$ is well-f-covered with forest number $C(n,2)+ 1$.
\end{thm}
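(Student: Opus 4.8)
The plan is to analyze an arbitrary maximal forest $F$ of $G$ according to how many vertices of the underlying $K_n$ it contains. Write $v_1,\dots,v_n$ for the vertices of $K_n$ and, for each edge $e=\{v_i,v_j\}$, write $u_{ij}$ for the added vertex, which is adjacent precisely to $v_i$ and $v_j$. Set $T=V(F)\cap\{v_1,\dots,v_n\}$. The first and decisive observation is that $|T|\le 2$: since $K_n$ is complete, any three of the $v_i$ span a triangle, so a forest cannot contain three or more of them. This collapses the whole problem into a short case analysis on $|T|\in\{0,1,2\}$.

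Next I would dispose of the case $|T|=0$ by showing it never yields a maximal forest. If $F$ contained no $v_i$, then since each $u_{ij}$ is adjacent only to $v_i$ and $v_j$, the vertices of $F$ would be pairwise non-adjacent; but then adding any single $v_i$ keeps $F$ acyclic (the $u_{ij}$ incident with $v_i$ become pendant, the remaining added vertices stay isolated), contradicting maximality.

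For $|T|=1$, say $T=\{v_i\}$, I would argue that maximality forces $F$ to contain every added vertex: each $u_{ij}$ is either pendant at $v_i$ or isolated, so none of them closes a cycle, and no further $v_k$ can be inserted because $u_{ik}\in V(F)$ would then complete the triangle $\{v_i,v_k,u_{ik}\}$. Hence $F$ consists of $v_i$ together with all $\binom{n}{2}$ added vertices, of order $\binom{n}{2}+1$. For $|T|=2$, say $T=\{v_i,v_j\}$, the only cycle that the added vertices can create is the triangle $\{v_i,v_j,u_{ij}\}$, since $u_{ij}$ is the unique added vertex adjacent to both $v_i$ and $v_j$; thus $F$ must omit $u_{ij}$ but, by maximality, must contain every other added vertex, and it cannot contain a third $v_k$. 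Hence $F$ consists of $v_i,v_j$ and all added vertices except $u_{ij}$, again of order $2+(\binom{n}{2}-1)=\binom{n}{2}+1$.

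Combining the cases, every maximal forest has order exactly $\binom{n}{2}+1$, so $G$ is well-f-covered with $f(G)=\binom{n}{2}+1$. I expect the only genuinely delicate points to be the maximality verifications in each case, in particular ruling out $|T|=0$ and confirming that in the $|T|=1,2$ cases no additional original vertex can be inserted, rather than any hard structural argument; the triangle bound $|T|\le 2$ is what makes the remainder routine.
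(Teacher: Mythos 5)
Your proposal is correct and follows essentially the same route as the paper: bound the number of $K_n$-vertices in a maximal forest by $2$ via the triangle observation, rule out $0$ by maximality, and check that the cases of one or two such vertices both yield order $\binom{n}{2}+1$. Your write-up is in fact slightly more careful than the paper's about the maximality verifications (why no cycle arises among the added vertices, and why the case with no $K_n$-vertex is impossible), but the argument is the same.
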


\begin{proof}
Let $F$ be a maximal forest of $G$. By maximality, $F$ contains at least one of the vertices of the part $K_{n}$ (note that no two distinct vertices are adjacent in $G$). Call this vertex $x$. If $F$ does not contain another vertex of $K_{n}$, it contains all added vertices. Otherwise, $F$ contains only one another vertex of $K_{n}$, say $y$. Let $e^{\ast}= \{x,y\}$, be the edge between $x$ and $y$ in $K_{n}$. By maximality, $F$ contains all vertices $u_{e}$ which $e\neq e^{\ast}$. In both cases, $|F|= C(n,2)+ 1$.
\end{proof}

\begin{thm}
Let $G$ be a graph obtained by taking $s$ disjoint complete graphs of the same order $n$, and adding edges between the cliques so that the added edges form a matching. Then $G$ is well-f-covered and if $n\geq 3$, then $f(G)= 2s$.
\end{thm}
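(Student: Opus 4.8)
The plan is to show that every maximal forest of $G$ meets each of the $s$ cliques in exactly two vertices; since two is also the most a forest can contain from a clique, this makes every maximal forest have order $2s$, which simultaneously yields $f(G)=2s$ and well-f-coveredness. Two elementary observations drive everything. First, because $n\ge 3$, an induced forest cannot contain three vertices of one clique (they would form a triangle), so $|V(F)\cap Q|\le 2$ for every clique $Q$ and every induced forest $F$; summing over the $s$ cliques gives $f(G)\le 2s$. Second, for $x\in V(F)$ lying in a clique $Q$, the $F$-neighbours of $x$ are the at most one further vertex of $Q$ lying in $F$ together with the at most one vertex matched to $x$, so $\deg_F(x)\le 2$. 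Thus every induced forest of $G$ is a disjoint union of paths — this is the key structural fact I would record first.

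Given this, it remains to prove the reverse inequality at the level of maximal forests: a maximal forest $F$ cannot meet a clique $Q$ in fewer than two vertices. The case $V(F)\cap Q=\emptyset$ is immediate, since any $v\in Q$ has at most one neighbour in $F$ (its matched vertex, if present), so $G[V(F)\cup\{v\}]$ is still a forest, contradicting maximality. The only substantial case is $V(F)\cap Q=\{w\}$. If some $v\in Q\setminus\{w\}$ could be added without creating a cycle we would again contradict maximality, so I may assume every such $v$ fails. Adding $v$ can create a cycle only through its two possible $F$-neighbours, namely $w$ and its matched vertex $v'$; hence for each of the $n-1\ge 2$ vertices $v\in Q\setminus\{w\}$ the matched vertex $v'$ must lie in $V(F)$ and in the same (path) component $T$ as $w$.

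Now I would argue that $w$ and every such $v'$ is an endpoint of the path $T$. Indeed, the matched neighbour of $v'$ is $v\notin V(F)$, so the only $F$-neighbours of $v'$ are clique-mates, of which at most one is present, forcing $\deg_F(v')=1$; and $w$, whose only possible $F$-neighbour outside $Q$ is its own matched vertex, likewise has degree $1$ (were it isolated, $T=\{w\}$ could contain no $v'$). Since the matched vertices $v'$ lie in cliques different from $Q$, they are distinct from $w$, and they are distinct from one another because the added edges form a matching. Thus $w,v_1',\dots,v_{n-1}'$ are $n\ge 3$ distinct degree-one vertices of the path $T$, whereas a path has only two endpoints — a contradiction. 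Hence $|V(F)\cap Q|=2$ for every clique, every maximal forest has exactly $2s$ vertices, and the theorem follows.

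The step I expect to be most delicate is this last one: correctly arguing that the forced vertices $v'$, together with $w$, are all genuine endpoints of a \emph{single} path (rather than interior vertices or members of different components), since this is exactly where the matching hypothesis and $n\ge 3$ are converted into the clean ``a path has two ends'' contradiction; once the degree bound of the first paragraph is in hand, the rest is bookkeeping. For completeness I would dispose of the excluded range $n=2$ separately: there $G$ itself has maximum degree at most $2$, so it is a disjoint union of paths and cycles and is well-f-covered by Theorem 6.1, even though its forest number need not equal $2s$.
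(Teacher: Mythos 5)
Your proof is correct, and for the crucial step --- showing that a maximal forest meets each clique in at least two vertices when $n\ge 3$ --- it takes a genuinely different route from the paper's. The paper also supposes $V(F)\cap A_i=\{x\}$ and adds two further clique vertices $y$ and $z$, but it then works directly with the two cycles $C_y$ and $C_z$ so created: it traces both cycles edge by edge out of $x$, alternating between matching edges and clique edges and invoking the degree restrictions at every step, until it concludes that $C_y$ and $C_z$ must coincide, contradicting $y'\neq z'$; this iteration is left informal (``repeating the above procedure\dots''). You instead package the same degree information once and for all into the structural lemma that every induced forest of $G$ has maximum degree at most $2$ and is therefore a disjoint union of paths, and then turn the forced vertices $w,v_1',\dots,v_{n-1}'$ into $n\ge 3$ distinct degree-one vertices of a single path component --- an immediate pigeonhole contradiction, since a path has only two ends. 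This buys a shorter and fully rigorous finish (the distinctness of the $v_i'$ and their membership in one component are exactly the points you flag and verify), whereas the paper's version exhibits the offending cycles explicitly at the cost of an open-ended case analysis. Your handling of the rest matches the paper: the bound $|V(F)\cap Q|\le 2$ is identical, and $n=2$ is dispatched via maximum degree $2$ and Theorem 6.1; you should just add the one-line remark that $n=1$ (where $G$ is itself a forest) is covered the same way, as the well-f-coveredness claim is asserted for all $n$.
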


\begin{proof}
If $n=1$, every edge of $G$ is an element of the matching and therefore $G$ is a forest, itself. Let $n\geq 2$. Let we denote the complete graphs which form $G$, with $A_{1}, \ldots, A_{s}$, respectively. There are two cases. 

$\mathbf{Case 1.}$ $n> 2$. In this case, we show that every maximal forest of $G$ contains exactly two vertices from any $A_{i}, \ 1\leq i\leq s$. Let $F$ be a maximal forest of $G$. For any $1\leq i\leq s$, $F$ contains at most two vertices from $A_{i}$ because every tree distinct vertices of $A_{i}$ form a cycle. Since $F$ is maximal and every vertex of $A_{i}$ is incident with at most one element of  the matching, $F$ contains at least one vertex of $A_{i}$, say $x$. Suppose that $A_{i}$ does not contain any other vertex of $A_{i}$ (on the contrary). The graph $A_{i}$ has two distinct vertices except $x$, say $y$ and $z$. Since $y$ is not contained in $F$, the graph $G[V(F)\cup \{y\}]$ has a cycle $C_{y}$ which contains $y$ as a vertex. Similarly, the graph $G[V(F)\cup \{z\}]$ has a cycle as $C_{z}$ which contains $z$. The cycles $C_{y}$ and $C_{z}$ contain $x$, as well because no two elements of matching are adjacent. Therefore the edge between $x$ and $y$ in $A_{i}$ is an edge of $C_{y}$, and the edge between $x$ and $z$ in $A_{i}$ is an edge of $C_{z}$. Let $e_{x}$ and $e_{y}$ be the edges in $C_{y}$, except the edge $\{x,y\}$, which are incident with $x$ and $y$, respectively. Also let $e^{\prime}_{x}$ and $e_{z}$ be the edges in $C_{z}$, except the edge $\{x,z\}$, which are incident with $x$ and $z$, respectively. Since $C_{y}$ does not contain any vertices of $A_{i}$ expect $x$ and $y$, the edges $e_{x}$ and $e_{y}$ are in the matching. Similarly $e^{\prime}_{x}$ and $e_{z}$ are in the matching. Therefore $e_{x}= e^{\prime}_{x}$ (because $x$ is incident at most with one element of the matching).
\begin{center}
\definecolor{ududff}{rgb}{0.30196078431372547,0.30196078431372547,1.}
\begin{tikzpicture}[line cap=round,line join=round,>=triangle 45,x=1.0cm,y=1.0cm]
\clip(-2.8,1.3) rectangle (2.8,5.6);
\draw [line width=1.2pt] (-2.28,3.8)-- (-1.54,2.82);
\draw (-0.64,1.98) node[anchor=north west] {\small{Figure 5}};
\draw [line width=1.2pt] (-1.56,4.78)-- (-2.28,3.8);
\draw [line width=1.2pt] (2.,2.82)-- (-1.54,2.82);
\draw [line width=1.2pt] (2.02,3.78)-- (-2.28,3.8);
\draw [line width=1.2pt] (-1.56,4.78)-- (2.04,4.74);
\draw (-2.75,4.02) node[anchor=north west] {$\mathit{x}$};
\draw (-1.99,5.08) node[anchor=north west] {$\mathit{y}$};
\draw (-2.02,3.03) node[anchor=north west] {$\mathit{z}$};
\draw (2.,5.15) node[anchor=north west] {$\mathit{y^{'}}$};
\draw (1.97,4) node[anchor=north west] {$\mathit{u_{1}}$};
\draw (1.99,3.21) node[anchor=north west] {$\mathit{z^{'}}$};
\draw (-0.14,5.23) node[anchor=north west] {$\mathit{e_{y}}$};
\draw (-0.12,4.2) node[anchor=north west] {$\mathit{e_{x}}$};
\draw (-0.14,3.25) node[anchor=north west] {$\mathit{e_{z}}$};
\begin{scriptsize}
\draw [fill=ududff] (-2.28,3.8) circle (1.5pt);
\draw [fill=ududff] (-1.54,2.82) circle (1.5pt);
\draw [fill=ududff] (-1.56,4.78) circle (1.5pt);
\draw [fill=ududff] (2.02,3.78) circle (1.5pt);
\draw [fill=ududff] (2.,2.82) circle (1.5pt);
\draw [fill=ududff] (2.04,4.74) circle (1.5pt);
\end{scriptsize}
\end{tikzpicture}
\end{center}
\label{Figure 5}
Suppose that $y^{\prime}(\neq y)$, $u_{1}(\neq x)$, and $z^{\prime}(\neq z)$ be the vertices incident with $e_{y}$, $e_{x}$, and $e_{z}$, respectively. These tree vertices are distinct pairwise. Suppose that $y^{\prime}$ be in $A_{l}$, $u_{1}$ be in $A_{r}$, and $z^{\prime}$ be in $A_{t}$, where $l, r, t\neq j$. At least two item of $l, r$, and $t$ are distinct because $F$ can not contain tree distinct vertices from the same clique. On the other hand, the edge of $C_{y}$, expect $e_{x}$, which is incident with $u_{1}$, is not an element of the matching and therefore is contained in $A_{r}$. Similarly, the edge of $C_{z}$, expect $e_{x}$, which is incident with $u_{1}$, is contained in $A_{r}$. Since $F$ can contain at most two vertices of $A_{r}$, two recent edges are equal. Let the other vertex (except $u_{1}$) incident with this common edge be $u_{2}$, a vertex in $A_{r}$. Now the other edge incident with $u_{2}$ in $C_{y}$, also the other edge incident with $u_{2}$ in $C_{z}$, are elements of the matching and therefore are the same. This common edge is incident with a vertex in a clique except $A_{j}$ and $A_{r}$. Now assuming this edge instead of $e_{x}$, and repeating the above procedure, $C_{y}$ and $C_{z}$ will be identified, a contradiction (with $y^{\prime} \neq z^{\prime}$).

$\mathbf{Case 2.}$ $n=2$. In this case, the degree of any vertex in $G$ is at most two. One can see that each connected components of $G$ is a cycle or a tree. Thus $G$ is well-f-covered upon theorem 6.1.
\end{proof}

Now, we consider join.

\begin{thm}
Let $G$ and $H$ be two disjoint nonempty graphs, and $G$ be without any maximal independent set of size 1. Then the join of $G$ and $H$ is well-f-covered, if and only if:

\hskip -0.45 true cm(1) $G$ and $H$ be well-f-covered,

\hskip -0.45 true cm(2) $G$ be well-covered and any two maximal independent vertex set of size at least 2 in $H$ be of the same size.

\hskip -0.45 true cm(3) $f(G)= f(H)= \alpha (G)+ 1= \alpha (H)+ 1$,

\hskip -0.45 true cm and anyway $f(G\vee H)= f(G)= f(H)$.
\end{thm}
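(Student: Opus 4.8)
The plan is to determine exactly which induced subgraphs of the join $G\vee H$ are forests, then which of those are maximal, and finally to read off their orders. For a forest $F$ of $G\vee H$ write $A=V(F)\cap V(G)$ and $B=V(F)\cap V(H)$. Since every vertex of $G$ is adjacent to every vertex of $H$, the part of $F$ between $A$ and $B$ is the complete bipartite graph $K_{|A|,|B|}$, which contains a $4$-cycle as soon as $|A|\ge 2$ and $|B|\ge 2$; hence every forest satisfies $\min(|A|,|B|)\le 1$. A short case analysis then yields a structure lemma: $A\cup B$ induces a forest precisely when one of the following holds: $A=\emptyset$ and $H[B]$ is a forest; $B=\emptyset$ and $G[A]$ is a forest; $|A|=1$ and $B$ is independent in $H$; or $|B|=1$ and $A$ is independent in $G$. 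The only substantive point is that attaching a single vertex $a$ to a set $B$ through all join-edges creates a cycle iff two vertices of $B$ lie in a common component of $H[B]$, i.e.\ iff $H[B]$ has an edge; so a mixed forest with exactly one vertex on one side forces the other side to be independent.

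Next I would read off the maximal forests, using the standing hypotheses that $G$ has no maximal independent set of size $1$ (equivalently, no universal vertex) and that $G,H$ each contain an edge (being nonempty). I first record that in a graph possessing an edge no maximal forest is an independent set, so every maximal forest of $G$, and of $H$, contains an edge. The maximal forests of $G\vee H$ then fall into four families: (I) those with $B=\emptyset$, which are exactly the maximal forests of $G$ (they contain an edge, so no vertex of $H$ may be attached), of order $f(G)$; (II) the symmetric family with $A=\emptyset$, of order $f(H)$; (III) those with $|A|=1$ and $|B|\ge 2$, where maximality forces $B$ to be a maximal independent set of $H$ of size $\ge 2$, of order $1+|B|$; and (IV) the symmetric family with $|B|=1$ and $|A|\ge 2$, where $A$ is a maximal independent set of $G$, of order $1+|A|$. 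The leftover degenerate configuration $|A|=|B|=1$ gives a two-vertex forest $\{a,b\}$ that is maximal only if $a$ is universal in $G$ and $b$ in $H$; as $G$ has no universal vertex, this case produces no maximal forest and is discarded.

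With this inventory both implications become bookkeeping. For the forward direction, assume (1)--(3) and put $k=f(G)=f(H)=\alpha(G)+1=\alpha(H)+1$, so $\alpha(G)=\alpha(H)=k-1\ge 2$ and $k\ge 3$. Families (I),(II) have order $k$ by well-f-coveredness of $G,H$; in (IV) well-coveredness of $G$ forces $|A|=\alpha(G)=k-1$; in (III) condition (2) together with $\alpha(H)=k-1\ge 2$ forces every relevant $B$ to have size $\alpha(H)=k-1$; so every maximal forest has order $k$ and $f(G\vee H)=k=f(G)=f(H)$. For the converse, let $G\vee H$ be well-f-covered with forest number $N$. Family (I) makes all maximal forests of $G$ have order $N$, so $G$ is well-f-covered with $f(G)=N$, and symmetrically $f(H)=N$, giving (1). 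Family (IV), applied to every maximal independent set of $G$ (all of size $\ge 2$ by hypothesis), shows each has size $N-1$, so $G$ is well-covered with $\alpha(G)=N-1$ and $N=\alpha(G)+1\ge 3$; family (III) gives the analogous statement for maximal independent sets of $H$ of size $\ge 2$, which is the $H$-part of (2). Finally $f(H)=N\ge 3$ rules out $H=K_m$, so $\alpha(H)\ge 2$ and its maximum independent set, being a maximal independent set of size $\ge 2$, has size $N-1$; thus $\alpha(H)=N-1$ and (3) follows.

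The main obstacle is the maximality bookkeeping of the second paragraph rather than any arithmetic. One must see clearly why attaching a single vertex to an independent set keeps it a forest (so that mixed forests force independence of the larger side), why the condition $|B|\ge 2$ is exactly what blocks the addition of a second vertex of $G$, and why the degenerate $\{a,b\}$ configuration is eliminated precisely by the hypothesis that $G$ has no maximal independent set of size $1$. Getting these maximality conditions exactly right --- and using that $G$ and $H$ are nonempty (contain an edge) to exclude the edgeless degeneracies in which $f$ and $\alpha$ would coincide --- is what makes the clean four-family inventory, and hence the stated equivalence, go through.
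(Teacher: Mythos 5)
Your proposal is correct and takes essentially the same approach as the paper: both arguments rest on classifying the maximal forests of $G\vee H$ into the maximal forests of $G$, the maximal forests of $H$, and sets of the form (maximal independent set on one side) $\cup$ (single vertex on the other side), with the hypothesis that $G$ has no maximal independent set of size $1$ serving exactly to exclude the degenerate single-edge configuration. Your only departure is organizational --- you establish the complete four-family inventory once and read both implications off it, whereas the paper constructs the relevant maximal forests ad hoc in the forward direction and performs the same case analysis on $|X|,|Y|$ only in the converse.
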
 

\begin{proof}
Let the join of $G$ and $H$ is well-f-covered. Suppose that $F_{G}$ be a maximal forest for $G$. Since $G$ is not empty, $F_{G}$ contains two vertices, $x$ and $y$, adjacent in $G$ (note that by maximality, $F_{G}$ is not a trivial graph, and if $F_{G}$ be an empty graph with $n$ $(n\geq 2)$ vertices, for a non-isolated vertex $g$ of $G$, one can add $g$ to $F_{G}$ obtaining a larger forest in $G$, a contradiction). The graph $F_{G}$ is a maximal forest of $G\vee H$ because any vertex of $H$ is adjacent with both $x$ and $y$. Therefore $|V(H)|= f(G\vee H)$. Thus $G$ is well-f-covered and $f(G)= f(G\vee H)$. A similar argument shows that $H$ is well-f-covered too and $f(H)= f(G\vee H)$. Let $B_{G}$ be a maximal independent set in $G$. The set $B_{G}$ has two distinct vertices, say $u_{1}$ and $u_{2}$. Let $v_{1}$ be any vertex of $H$. The subgraph $(G\vee H) [B_{G}\cup \{V_{1}\}]$ is a maximal forest in $G\vee H$ because first, every vertex of it, except $v_{1}$, is of degree 1, and therefore it is without any cycle, and secondary, adding any vertex $t\in V(G)$ to it, forms a cycle (note that $t$ is adjacent with a vertex $u$ of $G$, and both $t$ and $u$ are adjacent with $v_{1}$) while adding any vertex $v_{2}\in V(H)$ forms the cycle $u_{1} v_{1} u_{2} v_{2}$. Thus $|B_{G}|+ 1= f(G\vee H)$ and therefore $G$ is well-covered and $\alpha (G)+ 1= f(G\vee H)$. Now let $S_{1}$ and $S_{2}$ be two maximal independent sets of size at least 2 in $H$. The subgraphs $(G\vee H) [S_{1}\cup \{g\}]$ and $(G\vee H) [S_{2}\cup \{g\}]$, where $g\in V(G)$, are two maximal forests of $G\vee H$. Thus $|S_{1}\cup \{g\}|= |S_{2}\cup \{g\}|$ and therefore $|S_{1}|=|S_{2}|$. Finally, we prove that  $f(H)= \alpha (H)+1$. If $H$ be a complete graph, since $H$ is not empty, $f(H)=2$ and $\alpha (H)=1$, and therefore the equality holds. Otherwise, $H$ has two non-adjacent vertices $r,s$. Let $B_{H}$ be a maximal independent set in $H$ containing $\{r,s\}$. The subgraph $(G\vee H) [B_{H}\cup \{g\}]$ where $g\in V(G)$, is a maximal forest of $G\vee H$. Thus $\alpha (H)+ 1= f(G\vee H)= f(H)$.

Conversely, suppose that conditions (1), (2), and (3) hold. We show that $G\vee H$ is well-f-covered and $f(G\vee H)= f(G)= f(H)$. 

Let $F^{\ast}$ be a maximal forest of $G\vee H$. If all vertices of $F^{\ast}$ be in $G$, then $F^{\ast}$ is a maximal forest of $G$ and therefore by (1), $|V(F^{\ast})|= f(G)$. Similarly, if all vertices of $F^{\ast}$ be in $H$, $|V(F^{\ast})|= f(H)$. Suppose both $V(F^{\ast})\cap V(G)$ and $V(F^{\ast})\cap V(H)$ ba nonempty. Let $X:= V(F^{\ast})\cap V(G)$ and $Y:= V(F^{\ast})\cap V(H)$. The sets $X$ and $Y$ are independent set in $G$ and $H$, respectively. On the other hand, at least one of $X$ and $Y$ is of size 1. If $|Y|=1$, then $X$ is a maximal independent set of $G$ and therefore by (2), $|V(F^{\ast})|= \alpha (G)+ 1$. If $|X|=1$, then $Y$ is a maximal independent vertex set in $H$ of size at least 2, and therefore by (2), $|V(F^\ast)|= \alpha (H)+ 1$. By (3), in any case, $|V(F^{\ast})|= f(G)= f(H)$.
\end{proof}

\begin{rem}
In Theorem 6.8, if in addition, the graph $H$ be without any maximal independent set of size 1, then the well-coveredness of $H$ is implied, as well.
\end{rem}

\begin{thm}
Let $G$ and $H$ be two disjoint graphs, each one has a maximal independent set of size 1. Then $G\vee H$ is well-f-covered, if and only if $G$ and $H$ be complete graphs.
\end{thm}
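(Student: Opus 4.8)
The plan is to first translate the hypothesis into structural terms. A graph has a maximal independent set of size $1$ precisely when it has a \emph{universal} vertex, i.e.\ a vertex adjacent to all the others; and a graph has \emph{all} its maximal independent sets of size $1$ precisely when it is complete (for a non-complete graph, two non-adjacent vertices extend to a maximal independent set of size at least $2$). So the hypothesis gives a universal vertex $g_{0}$ in $G$ and a universal vertex $h_{0}$ in $H$, and the claim reduces to showing that $G\vee H$ is well-f-covered if and only if neither $G$ nor $H$ contains two non-adjacent vertices. For the easy implication, if $G$ and $H$ are both complete then $G\vee H$ is itself the complete graph on $|V(G)|+|V(H)|\ge 2$ vertices, which is well-f-covered with forest number $2$ by the remarks following Definition 3.1.

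For the converse I would exhibit two maximal forests of different orders whenever $G$ (say) fails to be complete. First, the single edge $\{g_{0},h_{0}\}$ is a maximal forest of $G\vee H$ of order $2$: no vertex $w\in V(G)$ can be added, since $w$ is adjacent to $g_{0}$ (as $g_{0}$ is universal in $G$) and to $h_{0}$ (a join edge), creating the triangle $w\,g_{0}\,h_{0}$; symmetrically no vertex of $H$ can be added using that $h_{0}$ is universal in $H$. Hence $f(G\vee H)=2$, and if $G\vee H$ is well-f-covered then every maximal forest has order $2$. Now suppose $G$ is not complete and pick a maximal independent set $A$ of $G$ with $|A|\ge 2$. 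For any $v\in V(H)$, the subgraph induced on $A\cup\{v\}$ is a star centred at $v$, hence a forest of order $|A|+1\ge 3$. I would verify its maximality: every $w\in V(G)\setminus A$ is adjacent to some $a\in A$ (maximality of $A$), yielding the triangle $w\,a\,v$; and every $v'\in V(H)\setminus\{v\}$ is adjacent to two distinct vertices $a_{1},a_{2}\in A$ (this is where $|A|\ge 2$ is used), yielding the $4$-cycle $v\,a_{1}\,v'\,a_{2}$. This produces a maximal forest of order $\ge 3\ne 2$, contradicting well-f-coveredness; so $G$ must be complete, and the identical argument applied with the roles of $G$ and $H$ exchanged forces $H$ to be complete as well.

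The main obstacle is the verification that the constructed stars and the edge $\{g_{0},h_{0}\}$ are genuinely \emph{maximal} forests of the join, not merely forests. The triangle and $4$-cycle arguments are exactly where the universality of $g_{0}$ and $h_{0}$ and the bound $|A|\ge 2$ enter, and the delicate part is keeping the two enlargement cases—adding a vertex of $G$ versus adding a vertex of $H$—cleanly separated. Once these maximality checks are in place, the rest (the dictionary between maximal independent sets of size $1$ and universal vertices, and the invocation of $f(K_{n})=2$) is routine.
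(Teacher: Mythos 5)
Your proof is correct and follows essentially the same route as the paper: both exhibit the edge between the two universal vertices as a maximal forest of order $2$ and the star on a maximal independent set of size at least $2$ together with one vertex of the other side as a maximal forest of order at least $3$, contradicting well-f-coveredness. Your version is in fact more carefully written, since you verify the maximality of both forests explicitly, which the paper leaves implicit.
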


\begin{proof}
If both $G$ and $H$ be complete, then $G\vee H$ is complete, and therefore it is well-f-covered.

Conversely, Suppose that $G\vee H$ be well-f-covered. Let at least one of $G$ and $H$, give $G$, is not complete (on the contrary). $G$ has two non-adjacent vertices $x$ and $y$. Let $M_{G}$ be a maximal independent vertex set in $G$, containing $\{x,y\}$. Also let $\{g\}$ and $\{h\}$ be maximal independent set of size 1 in $G$ and $H$, respectively. The subgraphs $(G\vee H) [M_{G}\cup \{b\}]$ and $(G\vee H) [\{a,b\}]$ are two maximal forests of different sizes in $G\vee H$.
\end{proof}

\begin{thm}
Let $G$ and $H$ be two disjoint graphs such that $G$ is nonempty and $H$ is empty of size $n\geq 2$. Then $G\vee H$ is well-f-covered if and only if:

\hskip -0.45 true cm(1) $G$ be well-f-covered,

\hskip -0.45 true cm(2) all maximal independent sets of size at least 2 in $G$ are of the same size,

\hskip -0.45 true cm(3) $f(G)= \alpha (G)+ 1= n+1$, 

\hskip -0.45 true cm and anyway $f(G\vee H)= f(G)= n+1$.
\end{thm}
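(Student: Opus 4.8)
The plan is to describe all maximal forests of $G\vee H$ explicitly and then read off the well-f-coveredness condition by comparing their orders. For a forest $F$ in $G\vee H$ set $X:=V(F)\cap V(G)$ and $Y:=V(F)\cap V(H)$. Since every vertex of $G$ is adjacent to every vertex of $H$, if both $|X|\geq 2$ and $|Y|\geq 2$, then two vertices of $X$ together with two vertices of $Y$ span a $4$-cycle; hence at most one of $X,Y$ has size at least $2$. Moreover, because $H$ is empty, $H[Y]$ contributes no edges, and if $|Y|=1$ then any edge inside $X$ would close a triangle through the single vertex of $Y$; so whenever $Y\neq\emptyset$ the set $X$ must be independent in $G$.

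Using these observations I would split the maximal forests of $G\vee H$ into three families. First, those with $Y=\emptyset$: here $F$ is an induced forest of $G$, and since $G$ is nonempty every maximal forest of $G$ contains an edge (an edgeless induced forest of $G$ can always be enlarged within $G$), so no vertex of $H$ can be appended; hence this family is exactly the set of maximal forests of $G$, whose orders are the orders of maximal forests of $G$. Second, those with $|Y|=1$ and $|X|\geq 2$: here $X$ is independent, and maximality forces $X$ to be a \emph{maximal} independent set of $G$ of size at least $2$, so $F$ has order $|X|+1$. Third, those with $|X|=1$: maximality forces $Y=V(H)$ (otherwise a further vertex of $H$ could be added), giving order $n+1$; one such forest $\{x\}\cup V(H)$ exists for every $x\in V(G)$, and since $n\geq 2$ no second vertex of $G$ can be added. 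Note that a singleton maximal independent set of $G$ does \emph{not} yield a forest of the second family, since it extends into the third; this is exactly why only maximal independent sets of size at least $2$ enter the analysis, matching the wording of condition (2).

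With this classification the equivalence follows by matching orders. The third family always occurs and has order $n+1$, so well-f-coveredness of $G\vee H$ forces every maximal forest to have order $n+1$. Applying this to the first family yields that $G$ is well-f-covered with $f(G)=n+1$, which is condition (1) together with half of condition (3); applying it to the second family yields that every maximal independent set of $G$ of size at least $2$ has size $n$, which is condition (2). The step I expect to require the most care is recovering $\alpha(G)=n$: I would first observe that $G$ cannot be complete, since a complete graph has forest number $2\neq n+1$; hence $G$ has two non-adjacent vertices and therefore a maximum independent set of size $\alpha(G)\geq 2$, which, being a maximal independent set of size at least $2$, has size $n$, so $\alpha(G)=n$ and condition (3) holds. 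Conversely, assuming (1)--(3), condition (3) gives $\alpha(G)=n\geq 2$, so a maximal independent set of size $n$ exists, and (2) forces every maximal independent set of size at least $2$ to have size $n$; then the first, second, and third families all have order $n+1$, so every maximal forest of $G\vee H$ has order $n+1$. This shows $G\vee H$ is well-f-covered with $f(G\vee H)=f(G)=n+1$.
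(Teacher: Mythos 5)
Your proposal is correct and follows essentially the same route as the paper: classify maximal forests of $G\vee H$ by the sizes of $X=V(F)\cap V(G)$ and $Y=V(F)\cap V(H)$, observe that the join structure forces at most one side to have two or more vertices and forces $X$ to be independent whenever $Y\neq\emptyset$, and then match orders against the always-present forests $V(H)\cup\{g\}$ of order $n+1$. Your explicit derivation of $\alpha(G)=n$ via the observation that $G$ cannot be complete is a welcome clarification of a step the paper leaves implicit, but it is not a different method.
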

\begin{proof}
Let $G\vee H$ be well-f-covered. By an argument similar to what we considered in Theorem 6.8, it is shown that $G$ is well-f-covered and $f(G)= f(G\vee H)$. For a vertex $g$ in $G$, the subgraph $(G\vee H) [V(H)\cup \{g\}]$ in $G\vee H$ is a maximal forest. Thus $n+1= f(G\vee H)$. Finally, with an argument similar to what we considered about $H$ in Theorem 6.8, it can be seen that every two maximal independent  set of size at least 2 in $G$ are of the same size and $\alpha (H)+ 1= f(H)$.

Conversely, let conditions (1), (2), and (3) hold. We prove that the graph $G\vee H$ is well-f-covered and $f(G\vee H)= f(G)= n+1$. Let $F^{\ast}$ be any maximal forest in $G\vee H$. By maximality. $F^{\ast}$ has at least one vertex from $G$. If $V(F^{\ast})\subseteq V(G)$, then $F^{\ast}$ is a maximal forest of $G$ and therefore by (1), $|V(F^{\ast})|= f(G)$. Otherwise, $X:= V(F^{\ast})\cap V(G)$ and $Y:= V(F^{\ast})\cap V(H)$ are nonempty independent set in $G$ and $H$, respectively. At least one of $X$ and $Y$ is of size 1. Now, if $|X|=1$, then $Y$ is a maximal independent  set in $X$ and therefore is equal to $V(H)$; Thus $|V(F^\ast)|= n+1$. If $|Y|=1$, then $X$ is a maximal independent set in $G$ of size at least 2 (note that $n\geq 2$) and therefore by (2), $|V(F^{\ast})|= \alpha (G)+1$. Now by (3), in any case, $|V(F^{\ast})|= f(G)= n+1$.
\end{proof}

\begin{rem}
In Theorem 6.11, if in addition, the graph $G$ be without any maximal independent set of size 1, then the well-coveredness of $G$ is implied, as well.
\end{rem}

In case where one of the components of $f\vee H$ be trivial graph, if the other component has maximal independent set of size 1, Theorem 6.10 can be used, but if the other graph be without any such independent set, the following theorem can be used.

\begin{thm}
Let $G$ be a nonempty graph without any maximal independent
set, and $H$ be a trivial graph, disjoint from $G$. Then $G\vee H$ is well-f-covered, if and only if $G$ be both well-f-covered and well-covered, and $f(G)= \alpha (G)+1$, and anyway $f(G\vee H)= f(G)$.
\end{thm}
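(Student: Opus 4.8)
The plan is to exploit the very special structure of $G\vee H$: since $H$ is trivial, say $V(H)=\{h\}$, the vertex $h$ is adjacent to every vertex of $G$ in $G\vee H$, so $G\vee H$ is just $G$ with one apex vertex attached. I would analyze the maximal forests of $G\vee H$ by sorting them into two families according to whether they contain $h$, exactly as in the proofs of Theorems 6.8 and 6.11. Throughout I would use the observation (already exploited for Theorem 6.8) that, because $G$ is nonempty, every maximal forest of $G$ contains an edge: an edgeless maximal forest would be an independent set, and since $V(G)$ is not independent it could be enlarged by some outside vertex into a larger star, contradicting maximality.

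The structural heart of the argument is the following description of the two families. First, a maximal forest $F^\ast$ of $G\vee H$ with $h\notin V(F^\ast)$ is precisely a maximal forest of $G$: it is forbidden to add any vertex of $G$ (so it is maximal in $G$), and it is automatically forbidden to add $h$, since $F^\ast$ already contains an edge and $h$ is adjacent to both of its endpoints, closing a triangle. Hence these forests have order $f(G)$. Second, a maximal forest $F^\ast$ with $h\in V(F^\ast)$ corresponds to a maximal independent set of $G$: writing $X:=V(F^\ast)\setminus\{h\}$, the set $X$ must be independent in $G$ (an edge of $G[X]$ together with $h$ would close a triangle), so $F^\ast$ is a star centred at $h$, and maximality of $F^\ast$ is equivalent to $X$ being a maximal independent set of $G$; such an $F^\ast$ has order $|X|+1$. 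I expect verifying this correspondence to be the main obstacle, since it requires both the triangle argument for independence and the translation between forest-maximality in $G\vee H$ and independent-set-maximality in $G$.

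For the forward direction I would assume $G\vee H$ is well-f-covered and read off the three conditions from the two families. The family avoiding $h$ shows that all maximal forests of $G$ have the common order $f(G\vee H)$, so $G$ is well-f-covered with $f(G)=f(G\vee H)$. For the family containing $h$, the hypothesis that $G$ has no maximal independent set of size $1$ guarantees every maximal independent set $X$ of $G$ satisfies $|X|\ge 2$, and each such $X$ yields a maximal forest $(G\vee H)[X\cup\{h\}]$; since these all have order $f(G\vee H)$, all maximal independent sets of $G$ share the size $f(G\vee H)-1$, i.e.\ $G$ is well-covered with $\alpha(G)+1=f(G\vee H)$. Combining the two identities gives $f(G)=\alpha(G)+1$, which is condition (3).

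For the converse I would assume conditions (1)--(3) and take an arbitrary maximal forest $F^\ast$ of $G\vee H$. By the structural description it lies in one of the two families: if $h\notin V(F^\ast)$ then $|V(F^\ast)|=f(G)$ by (1); if $h\in V(F^\ast)$ then its $G$-part is a maximal independent set, of size $\alpha(G)$ by well-coveredness (2), so $|V(F^\ast)|=\alpha(G)+1$. Condition (3) identifies these two values, so every maximal forest of $G\vee H$ has order $f(G)=\alpha(G)+1$; thus $G\vee H$ is well-f-covered with $f(G\vee H)=f(G)$, as claimed.
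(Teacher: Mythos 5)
Your proposal is correct and follows essentially the same route as the paper: split the maximal forests of $G\vee H$ according to whether they contain the apex vertex $h$, identify the first family with the maximal forests of $G$ (using the fact that a maximal forest of a nonempty graph must contain an edge, so $h$ cannot be added) and the second family with the sets $X\cup\{h\}$ for $X$ a maximal independent set of $G$, then read off conditions (1)--(3) in one direction and combine them in the other. The only cosmetic difference is that the paper's forward direction is delegated to ``an argument similar to Theorem 6.8,'' whereas you spell it out; your remark that the no-size-one-independent-set hypothesis forces $|X|\ge 2$ is harmless but not actually needed for the correspondence.
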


\begin{proof}
Suppose that $G\vee H$ is well-f-covered. An argument similar to what was considered in Theorem 6.8, Shows that $G$ is well-f-covered with forest number $f(G\vee H)$ and $G$ is well-covered and $f(G)= \alpha (G)+ 1$.

Conversely, let $G$ is well-f-covered and well-covered, and also $f(G)= \alpha (G)+ 1$. Suppose that $V(H)= \{v\}$. Consider a maximal forest $F^{\ast}$ in $G\vee H$. If $v\notin V(F^{\ast})$, then $F^{\ast}$ is a maximal forest of $G$ and therefore $|V(F^{\ast})|= f(G)$. Otherwise, $V(F^{\ast})- \{v\}$ is a maximal independent set in $G$ and therefore since $G$ is well-covered, $|V(F^{\ast})|= \alpha (G)+ 1$. Now since $f(G)= \alpha (G)+ 1$, the graph $G\vee H$ is well-f-covered and $f(G\vee H)= f(G)$.
\end{proof}

\begin{cor}
A  wheel of order $n$, $W_{n}$ $(n\geq 4)$, is well-f-covered, if and only if $n\in \{4,5\}$.
\end{cor}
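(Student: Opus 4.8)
The plan is to realise the wheel as a join and then invoke the join characterisations proved above. Write $W_n = C_{n-1} \vee K_1$, where the hub is the trivial graph $H = K_1$ and the rim is the cycle $G = C_{n-1}$ (with $n-1 \geq 3$ since $n \geq 4$). I will freely use the elementary facts from Section 3 that every cycle is well-f-covered with $f(C_{n-1}) = n-2$, together with $\alpha(C_{n-1}) = \lfloor (n-1)/2 \rfloor$.

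First I would dispose of $n=4$ separately, because here the rim $C_3 = K_3$ is complete and hence \emph{does} possess a maximal independent set of size $1$ (each single vertex dominates), so Theorem 6.13 does not apply. Instead $W_4 = K_3 \vee K_1 = K_4$ is complete, hence well-f-covered by the observations of Section 3 (equivalently, apply Theorem 6.10 with both factors complete). Thus $n=4$ belongs to the list.

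For $n \geq 5$ the rim is $C_{n-1}$ with $n-1 \geq 4$; a cycle of length at least $4$ has no dominating vertex, so it has no maximal independent set of size $1$, and Theorem 6.13 applies with $G = C_{n-1}$ and $H = K_1$. Hence $W_n$ is well-f-covered if and only if $C_{n-1}$ is well-f-covered, $C_{n-1}$ is well-covered, and $f(C_{n-1}) = \alpha(C_{n-1}) + 1$. The first condition always holds. The decisive one is the numerical condition $f(C_{n-1}) = \alpha(C_{n-1}) + 1$, which reads $n-2 = \lfloor (n-1)/2 \rfloor + 1$, i.e. $\lfloor (n-1)/2 \rfloor = n-3$; a one-line check shows this fails for every $n-1 \geq 5$ and holds at $n-1 = 4$, so among $n \geq 5$ only $n = 5$ survives. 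It then remains to confirm the middle condition for $n=5$: the $4$-cycle $C_4$ is well-covered since its only maximal independent sets are the two pairs of opposite vertices, each of size $2$. Therefore $W_5$ is well-f-covered, while $W_n$ fails to be well-f-covered for every $n \geq 6$.

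The main obstacle is conceptual rather than computational: one must correctly decide which join theorem governs each wheel, and the only genuinely delicate point is that the smallest case $n=4$ escapes Theorem 6.13 precisely because $C_3 = K_3$ is complete and carries a size-$1$ maximal independent set, forcing a separate (but immediate) treatment via $W_4 = K_4$. Once the right theorem is selected, everything reduces to the arithmetic identity $\lfloor (n-1)/2 \rfloor = n-3$ together with the easy well-coveredness of $C_4$, so I expect no serious difficulty beyond this case distinction.
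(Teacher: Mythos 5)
Your proof is correct and follows essentially the same route as the paper: write $W_n$ as the join of the rim cycle $C_{n-1}$ with the trivial hub, invoke Theorem 6.13, and reduce everything to the arithmetic identity $\lfloor (n-1)/2\rfloor = n-3$. In fact your version is slightly more careful than the paper's in two places. First, the paper applies Theorem 6.13 uniformly to all $n\geq 4$, but for $n=4$ the rim is $C_3=K_3$, which \emph{does} have maximal independent sets of size $1$, so the hypothesis of Theorem 6.13 is not met there; your separate treatment of $W_4=K_4$ as a complete graph closes that gap. Second, the paper asserts that every rim cycle is well-covered, which is false in general (e.g.\ $C_6$ has maximal independent sets of sizes $2$ and $3$); you only verify well-coveredness where it is actually needed, namely for $C_4$ in the case $n=5$, while for $n\geq 6$ the numerical condition already fails and the well-coveredness of the rim is irrelevant. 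Both proofs reach the same conclusion, but yours is the more rigorous rendering of the argument.
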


\begin{proof}
Any $W_{n}$ $(n\geq 4)$ is the join of a cycle of $n$ vertices, $C_{n}$, with a trivial graph disjoint from it. $C_{n}$ is well-f-covered with forest number $n-2$, and it is well-covered with independence number $[\frac{n-1}{2}]$. But for any $n\geq 4$, $[\frac{n-1}{2}]= (n-2)-1$, if and only if $n\in \{4,5\}$ (the equality is not satisfied for $n=6$, and for $n\geq 7$, $[\frac{n-1}{2}]< \frac{n-1}{2}+ 1\leq n-3$). Now use Theorem 6.13.
\end{proof}

Finally, we consider the case where both components of $G\vee H$ are empty.

\begin{thm}
Consider two disjoint empty graphs $G$ and $H$, of order $n_{1}$ and $n_{2}$, respectively. $G\vee H$ is well-f-covered, if and only if $n_{1}= n_{2}$ or at least one of $n_{1}$ and $n_{2}$ be 1.
\end{thm}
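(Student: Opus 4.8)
The plan is to reduce everything to the structure of induced subgraphs of a complete bipartite graph. Since $G$ and $H$ are both empty, their vertex sets $X := V(G)$ and $Y := V(H)$ are independent, and in the join every vertex of $X$ is adjacent to every vertex of $Y$; hence $G\vee H$ is exactly $K_{n_{1},n_{2}}$ with parts $X$ and $Y$. Any induced subgraph is obtained by choosing $A\subseteq X$ and $B\subseteq Y$, and it is again complete bipartite, namely $K_{|A|,|B|}$. The first step is the elementary observation that $K_{a,b}$ is acyclic precisely when $a\le 1$ or $b\le 1$: if both $a\ge 2$ and $b\ge 2$, then two vertices on each side span a $4$-cycle, while if one side has at most one vertex the graph is a star (or smaller), hence a tree. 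So the induced forests of $K_{n_{1},n_{2}}$ are exactly the pairs $(A,B)$ with $\min(|A|,|B|)\le 1$.

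Next I would dispose of the easy direction. If $\min(n_{1},n_{2})=1$, say $n_{2}=1$, then $K_{n_{1},1}$ is itself a star, hence a tree; the whole graph is then its unique maximal forest and $G\vee H$ is well-f-covered. This settles the ``at least one of $n_{1},n_{2}$ is $1$'' clause (and, in particular, the subcase $n_{1}=n_{2}=1$).

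The heart of the argument is the case $n_{1},n_{2}\ge 2$, where I would classify all maximal induced forests. Using the maximality condition together with the acyclicity criterion, I expect to show that a maximal forest must have one of exactly two shapes: either (i) a single vertex of $X$ together with all of $Y$, which is a star of order $n_{2}+1$, or (ii) all of $X$ together with a single vertex of $Y$, of order $n_{1}+1$. The reasoning I plan to use is: if the pair $(A,B)$ has $|A|=1$, then, since $n_{2}\ge 2$, no further vertex of $X$ can be adjoined without creating a $4$-cycle, so maximality forces $B=Y$; symmetrically $|B|=1$ forces $A=X$; a pair with $|A|=|B|=1$ (a single edge) is never maximal, because with $n_{1},n_{2}\ge 2$ it can always be enlarged on either side; and a pair with an empty side is never maximal, since any vertex of the missing side can be added. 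Once this classification is in hand, both families are nonempty, with orders $n_{2}+1$ and $n_{1}+1$, so all maximal forests have equal order if and only if $n_{1}+1=n_{2}+1$, i.e. $n_{1}=n_{2}$.

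Combining the two regimes gives the stated equivalence: $G\vee H$ is well-f-covered if and only if $n_{1}=n_{2}$ or $\min(n_{1},n_{2})=1$. The main obstacle I anticipate is the classification step when $n_{1},n_{2}\ge 2$ --- specifically, arguing cleanly that no maximal forest can keep one side at size exactly one while leaving the other side incomplete, and that the single-edge configuration is genuinely non-maximal. Everything else is routine once the lemma ``$K_{a,b}$ is acyclic if and only if $a\le 1$ or $b\le 1$'' is recorded.
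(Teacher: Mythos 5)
Your proposal is correct and follows essentially the same route as the paper: both arguments come down to observing that in $K_{n_1,n_2}$ with $n_1,n_2\ge 2$ every maximal induced forest is either one part plus a single vertex of the other, of order $n_1+1$ or $n_2+1$, and that a side of size $1$ makes the whole graph a tree. Your write-up is merely a slightly more systematic version of the paper's case analysis on $|V(F^{\ast})\cap V(G)|$.
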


\begin{proof}
Let $G\vee H$ be well-f-covered. If $|V(G)|, |V(H)|\geq 2$, for vertices $u\in V(G)$ and $v\in V(H)$, the subgraphs $(G\vee H) [V(G)\cup \{u\}]$ and  $(G\vee H) [V(H)\cup \{v\}]$ are two maximal forests in $G\vee H$. Thus $n_{1}= n_{2}$.

Conversely, if $n_{1}= 1$ or $n_{2}= 1$, $G\vee H$ is forest and therefore is well-f-covered. Let $n_{1}= n_{2}$, and let $F^{\ast}$ be a maximal forest of $G\vee H$. By maximality, $|V(F^{\ast})\cap V(G)|\geq 1$. If $|V(F^{\ast})\cap V(G)|= 1$, then $F^{\ast}$ contains all vertices of $H$ and therefore is of size $n_{2}+ 1$. If $|V(F^{\ast})\cap V(G)|\geq 2$, then $|V(F^{\ast})\cap V(H)|\leq 1$ because $F$ is cycleless. In this case, by maximality, $|V(F^{\ast})\cap V(H)|= 1$ and $|V(F^{\ast})\cap V(G)|= n_{1}$, and therefore $|V(F^{\ast})|= n_{1}+ 1$.
\end{proof}

The following characterization is implied from Theorem 6.14, immediately.

\begin{cor}
A complete bipartite graph $K_{r,s}$ is well-f-covered, if and only if $r=s$ or at least one of $r$ and $s$ be 1.
\end{cor}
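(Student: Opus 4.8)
The plan is to recognize the complete bipartite graph as a join of two empty graphs and then apply Theorem 6.14 verbatim. First I would recall from the Preliminaries that $K_{r,s}$ has parts $X$ and $Y$ of sizes $r$ and $s$ in which every vertex of $X$ is adjacent to every vertex of $Y$; since no two vertices inside $X$ are adjacent, and likewise none inside $Y$, the subgraphs induced on $X$ and on $Y$ are empty graphs of orders $r$ and $s$, respectively. Thus $K_{r,s}$ is precisely the join $G \vee H$ of two disjoint empty graphs $G$ and $H$ with $n_{1} = r$ and $n_{2} = s$.

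The second and final step is to feed this identification into Theorem 6.14, which asserts that $G \vee H$ is well-f-covered if and only if $n_{1} = n_{2}$ or at least one of $n_{1}, n_{2}$ equals $1$. Substituting $n_{1} = r$ and $n_{2} = s$ yields exactly the claimed criterion: $K_{r,s}$ is well-f-covered if and only if $r = s$ or at least one of $r$ and $s$ is $1$.

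I expect no real obstacle here. The only point requiring care is the structural observation in the first step, namely that complete bipartiteness is identical to the join of two edgeless graphs; this is immediate from the definitions, so the corollary follows with no further case analysis and no appeal to any result beyond Theorem 6.14.
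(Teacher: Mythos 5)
Your proposal is correct and is exactly the paper's argument: the paper likewise observes that $K_{r,s}$ is the join of two disjoint empty graphs of orders $r$ and $s$ and cites the theorem on joins of empty graphs as giving the corollary immediately. (Note only that the theorem you and the paper both cite as 6.14 is, by the paper's actual numbering, Theorem 6.15; the in-text reference in the paper is off by one.)
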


\vskip 0.4 true cm


\vskip 0.4 true cm




\end{document}